\newcommand{\dfn}[1]{\textit{#1}} 
\newcommand{\ex}[1]{e\left(#1\right)} 
\newcommand{\vd}[1]{deg(#1)} 
\newcommand{\gen}[1]{\operatorname{gen}(#1)}
\newcommand{\linspan}[1]{\operatorname{span}(#1)}
\def\BZ{\mathbb Z}
\def\E{\mathcal E}
\def\E{\mathcal E}
\theoremstyle{plain} \newtheorem{thm}{Theorem}[section]
\theoremstyle{plain} \newtheorem{prop}[thm]{Proposition}
\theoremstyle{plain} \newtheorem{lem}[thm]{Lemma}
\theoremstyle{plain} \newtheorem{cor}[thm]{Corollary}
\theoremstyle{plain} 
\theoremstyle{plain} \newtheorem{clm}[thm]{Claim}
\theoremstyle{definition} \newtheorem{definition}[thm]{Definition}
\begin{document}
\title{Geometric representations of linear codes}
\author{Pavel Rytíř\thanks{Supported by the Czech Science Foundation under the contract no. 201/09/H057 and by the grant \mbox{SVV-2010-261313} (Discrete Methods and Algorithms).}\\ {\small Department of Applied Mathematics,} {\small Charles University in Prague,}\\ {\small Malostranské~náměstí~25}, {\small Prague 118 00, Czech Republic}}
\maketitle


\begin{abstract}
We say that a linear code $\mathcal{C}$ over a field $\mathbb{F}$ is triangular representable if there exists a two dimensional simplicial complex $\Delta$ such that $\mathcal{C}$ is a punctured code of the kernel $\ker \Delta$ of the incidence matrix of $\Delta$ over $\mathbb{F}$ and there is a linear mapping between $\mathcal{C}$ and $\ker \Delta$ which is a bijection and maps minimal codewords to minimal codewords. We show that the linear codes over rationals and over $GF(p)$, where $p$ is a prime, are triangular representable. In the case of finite fields, we show that this representation determines the weight enumerator of $\mathcal{C}$. We present one application of this result to the partition function of the Potts model.

On the other hand, we show that there exist linear codes over any field different from rationals and $GF(p)$, $p$ prime, that are not triangular representable. We show that every construction of triangular representation fails on a very weak condition that a linear code and its triangular representation have to have the same dimension. 
\end{abstract}


2000 MSC: 05C65, 94B05, 90C27, 55U10

\section{Introduction}
\label{sec:intr}
The aim of this paper is to introduce a theory of geometric representations of general linear codes.
A seminal result of Galluccio and Loebl~\cite{loeblpfaffian} asserts that the Ising partition function on graph $G$ may be written as a linear combination of $4^{g(G)}$ Pfaffians, where $g(G)$ is the minimal genus of the closed Riemann surface in which $G$ can be embedded.
Recently, a topological interpretation of this result was given by Cimasoni and Reshetikhin~\cite{cimasoni-2007}.
We explain in Section~\ref{sec:mot} that the Ising partition function on graph $G$ may be described as the weight enumerator of the cycle space $\mathcal{C}$ of $G$.
Viewing the cycle space $\mathcal{C}$ as a linear code over $GF(2)$, a graph $G$ may be considered as a useful geometric representation of $\mathcal{C}$ which provides an important structure for the weight enumerator of $\mathcal{C}$, see Theorem~\ref{thm:galloebl}.
This motivated Martin Loebl to ask, about more than 10 years ago, the following question: Which binary linear codes are cycle spaces of simplicial complexes?
In general, for the linear codes with a geometric representation, one may hope to obtain a formula analogous to that of Theorem~\ref{thm:galloebl}.
This question remains open.
However, to extend the theory of Pfaffian orientations, it suffices to construct a geometric representation which carries over the weight enumerator only. This was achieved in Rytíř~\cite{rytir2} for binary linear codes. In this paper we present results for general linear codes.
By another result of Galluccio and Loebl~\cite{loeblpfaffian2}, the q-Potts partition function of graph $G$ is determined by the row space of the oriented incidence matrix $O_G$ of graph $G$ over $GF(q)$. The row space of $O_G$ is a linear code, and so one surprising application of our results is a new formula for the q-Potts partition function, where q is a prime.

We start with basic definitions.
A \dfn{linear code $\mathcal{C}$ of length $n$ and dimension $d$ over a field $\mathbb{F}$} is a linear subspace with dimension $d$ of the vector space $\mathbb{F}^n$. Each vector in $\mathcal{C}$ is called a \dfn{codeword}.
We define a partial order on $\mathcal{C}$ as follows:
Let $c=(c^1,\dots,c^n),d=(d^1,\dots,d^n)$ be codewords of $\mathcal{C}$. Then $c\preceq d$ if $c^i\neq 0$ implies $d^i\neq 0$ for all $i=1,\dots,n$. A codeword $d$ is \dfn{minimal} if $c\preceq d$ implies $c=d$ for all $c$.
The \dfn{weight} $w(c)$ of a codeword $c$ is the number of non-zero entries of $c$.
The \dfn{weight enumerator} of a finite code $\mathcal{C}$ is defined according to the formula
\begin{equation*}
W_\mathcal{C}(x):=\sum_{c\in \mathcal{C}} x^{w(c)}.
\end{equation*}
Let $\mathcal{C}\subseteq\mathbb{F}^n$ be a linear code over a field $\mathbb{F}$ and let $S$ be a subset of $\left\lbrace 1,\dots,n\right\rbrace $. \dfn{Puncturing} a code $\mathcal{C}$ along $S$ means deleting the entries indexed by the elements of $S$ from each codeword of $\mathcal{C}$. The resulting code is denoted by $\mathcal{C}/S$.

A \dfn{simplex} $X$ is the convex hull of an affine independent set $V$ in $\mathbb{R}^d$. The \dfn{dimension} of $X$ is $\left\lvert V\right\rvert-1$, denoted by $\dim X$.
The convex hull of any non-empty subset of $V$ that defines a simplex is called a \dfn{face} of the simplex.
A \dfn{simplicial complex} $\Delta$ is a set of simplices fulfilling the following conditions: Every face of a simplex from $\Delta$ belongs to $\Delta$ and the intersection of every two simplices of $\Delta$ is a face of both.
The dimension of $\Delta$ is $\max\left\lbrace\dim X\vert X\in\Delta\right\rbrace$.
Let $\Delta$ be a $d$-dimensional simplicial complex. We define the \dfn{incidence matrix} $A=\left(A_{ij}\right)$ as follows: The rows are indexed by $\left(d-1\right)$-dimensional simplices and the columns by $d$-dimensional simplices. We set
\begin{equation*}
A_{ij}:=\begin{cases}
         1& \text{if }(d-1)\text{-simplex }i\text{ belongs to }d\text{-simplex }j,\\
         0& \text{otherwise}.
        \end{cases}
\end{equation*}
This paper studies 2-dimensional simplicial complexes where each maximal simplex is a triangle or an edge. We call them \dfn{triangular configurations}.
The \dfn{cycle space} of $\Delta$ over a field $\mathbb{F}$, denoted $\ker\Delta$, is the kernel of the incidence matrix $A$ of $\Delta$ over $\mathbb{F}$, that is
$\{x\vert Ax=0\}$.
A linear code $\mathcal{C}$ is \dfn{triangular representable} if there exists a triangular configuration $\Delta$ such that $\mathcal{C}=\ker\Delta/S$ for some set $S$ and there is a linear mapping between $\mathcal{C}$ and $\ker\Delta$ which is a bijection and maps minimal codewords to minimal codewords. For such $S$ we write $S=S(\ker\Delta,\mathcal{C})$.

\subsection{Motivation}
\label{sec:mot}
Our motivation to study geometric representations of linear codes lies in the theory of Pfaffian orientation in the study of the Ising problem on graphs. In this section we use the notation from Loebl and Masbaum~\cite{loeblarf}.
Let $G=(V(G),E(G))$ be a finite unoriented  graph. We say that $E'\subseteq E(G)$ is
\dfn{even} if the graph $(V(G),E')$ has even degree at each vertex.
Let $\mathcal{E}(G)$
denote the set of all even sets of edges of $G$.
The set of all even sets $\mathcal{E}(G)$ is called cycle space of graph $G$.
Note that, graph $G$ is 1-dimensional simplicial complex.
Let $A_G$ be its incidence matrix,
then the set of characteristic vectors of the even sets forms the kernel of $A_G$ over $GF(2)$.

We assume that a variable $x_e$ is associated with each edge
$e$, and define the generating polynomial for even sets,  $\E_G$ , in $
\BZ[(x_e)_{e\in E(G)}]$,  as follows: 
$$\E_G(x)= \sum_{E'\in\mathcal{E}(G) }\,\,\prod_{e\in E'}x_e~.$$

Let $\mathcal{C}$ be the kernel of the incidence matrix $A_G$ of graph $G$. Then there is the following relation between the weight enumerator of $\mathcal{C}$ and the generating polynomial of even sets of $G$
$$
W_{\mathcal{C}}(x)=\E_G(z)\Big|
  _{\textstyle{z_e:=x \ \forall  e\in
  E(G)}}.
$$

Next, we describe the equivalence of Ising partition function and the generating function of even subgraphs.
The Ising partition function on graph $G$ is defined by 
$$
Z_G^{\mathrm{Ising}}(\beta) = Z_G^{\mathrm{Ising}}(x)\Big|
  _{\textstyle{x_e:=e^{\beta J_e} \ \forall  e\in
  E(G)}}
$$
 where the $J_e$ $( e\in E(G))$ are weights (coupling constants)
 associated with the edges of the graph $G$, the parameter $\beta$ is the inverse
 temperature,  and
$$ 
Z_G^{\mathrm{Ising}}(x)= \sum_{\sigma:V(G)\rightarrow \{1,-1\}} \  \prod_{e= \{u,v\}\in E(G)}x_e^{\sigma(u)\sigma(v)}.
$$
The theorem of van der Waerden \cite{vanderwarder} (see \cite[Section 6.3]{loeblbook}
for a proof) states that $Z_G^{\mathrm{Ising}}(x)$ is the same as $\E_G(x)$ up to
change of variables and 
multiplication by a constant factor: 

$$
Z_G^{\mathrm{Ising}}(x)= 2^{|V(G)|}\left(\prod_{e\in E(G)}\frac{x_e+ x_e^{-1}}{2}\right) \E_G(z)\Big|
  _{\textstyle{z_e:= \frac{x_e- x_e^{-1}}{x_e+ x_e^{-1}}}}.
$$

The significance of geometric properties of graph $G$ in studying the Ising partition function $Z_G^{\mathrm{Ising}}(x)$ and the generating function of the even subsets of edges of $G$, $\E_G(x)$, is expressed by the following theorem.

\begin{thm}[Galluccio and Loebl \cite{loeblpfaffian}]
\label{thm:galloebl}
If $G$
embeds into an orientable surface of genus $g$, then the
even subgraph
polynomial $\E_G(x)$ can be
expressed as a linear combination of $4^g$ Pfaffians of matrices constructed from the embedding of $G$.
\end{thm}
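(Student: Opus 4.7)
The plan is twofold: first reduce the even-subgraph polynomial $\E_G(x)$ to a perfect-matching generating polynomial on an auxiliary graph $G^\star$ embedded in the same surface, and then apply Kasteleyn's Pfaffian method extended to higher genus.

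For the reduction, at each vertex $v$ of $G$ of degree $d_v$ I would replace $v$ by a local gadget $H_v$ whose external ports correspond, in the cyclic order induced by the embedding, to the edge-ends incident to $v$. Choosing the standard Fisher--Kasteleyn gadget (used originally to convert the Ising model on $G$ to a dimer problem) yields a bijection, up to a universal monomial factor, between perfect matchings of $G^\star$ and even subsets $E'\subseteq E(G)$, with $\prod_{e\in E'}x_e$ tracked by the matching weights. Because the gadget is local, $G^\star$ embeds into $\Sigma_g$ with the same genus as $G$.

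For the Pfaffian step, fix a polygonal representation of $\Sigma_g$ cut along $2g$ generating loops $\gamma_1,\dots,\gamma_{2g}$ in general position with respect to $G^\star$. For each $T\subseteq\{1,\dots,2g\}$ construct a twisted Kasteleyn orientation $\omega_T$ by taking a face-wise consistent orientation on the polygon and then reversing every edge of $G^\star$ that crosses some $\gamma_i$ with $i\in T$. Generalising Kasteleyn's planar argument, one obtains
\begin{equation*}
\Pfaf(A_{\omega_T}) \;=\; \sum_{M}(-1)^{q_T([M])}\, w(M),
\end{equation*}
where $A_{\omega_T}$ is the weighted skew-symmetric adjacency matrix of $(G^\star,\omega_T)$, $[M]\in H_1(\Sigma_g;\BZ/2)\cong(\BZ/2)^{2g}$ is the class of the symmetric difference of $M$ with a fixed reference matching, and $q_T$ is a quadratic refinement of the $\BZ/2$-intersection form on $\Sigma_g$.

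The $2^{2g}=4^g$ forms $q_T$ obtained this way exhaust all quadratic refinements of the intersection form, and by the orthogonality relation
\begin{equation*}
\frac{1}{2^g}\sum_{q}(-1)^{\mathrm{Arf}(q)}(-1)^{q(h)} \;=\; \delta_{h,0}
\end{equation*}
for quadratic forms on $(\BZ/2)^{2g}$, the Arf-weighted sum of Pfaffians recovers the unsigned matching polynomial of $G^\star$ and hence, via the gadget reduction, equals $\E_G(x)$. The main obstacle is the topological computation: proving that the sign attached to each matching in $\Pfaf(A_{\omega_T})$ depends only on its $\BZ/2$-homology class, and that the $\{q_T\}$ genuinely realise the $2^{2g}$ distinct quadratic refinements so that the Arf-weighted sum collapses onto the trivial class. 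This rests on the non-degeneracy of the intersection pairing on the closed orientable surface $\Sigma_g$ and is the true heart of the theorem.
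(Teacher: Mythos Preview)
The paper does not prove this theorem at all: it is quoted as a result of Galluccio and Loebl \cite{loeblpfaffian} and serves purely as motivation for the notion of a geometric representation of a code. There is therefore no ``paper's own proof'' to compare your proposal against.

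For what it is worth, your outline is a faithful sketch of the standard route to the Galluccio--Loebl formula (essentially the Cimasoni--Reshetikhin topological reading of it, which the paper also cites): reduce $\E_G(x)$ to a dimer partition function via Fisher--Kasteleyn gadgets, then express the dimer sum as an Arf-weighted combination of $4^g$ Pfaffians indexed by the spin structures / quadratic refinements on $\Sigma_g$. The point you correctly flag as the crux---that the sign of a matching in $\Pfaf(A_{\omega_T})$ depends only on its $\BZ/2$-homology class and that the $q_T$ exhaust all $2^{2g}$ quadratic refinements---is indeed where the real work lies, but none of that work is reproduced in the present paper.
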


\subsection{Main results}

\begin{thm}
\label{thm:repr1}
Let $\mathcal{C}$ be a linear code over a field $\mathbb{F}$, $\mathcal{C}\subseteq\mathbb{F}^n$, and let $B$ be a basis of $\mathcal{C}$ such that for each $b\in B$, each entry of $b$ belongs to a cyclic subgroup $G(b)$ of the additive group of the field $\mathbb{F}$. Then $\mathcal{C}$ is triangular representable. Moreover, if $\mathcal{C}$ is finite, then there exists a triangular representation $\Delta$
such that: if $\sum_{i=0}^{m}a_ix^i$ is the weight enumerator of $\ker\Delta$ then
\begin{equation*}
W_\mathcal{C}(x)=\sum_{i=0}^{m}a_ix^{(i\bmod e)/2},
\end{equation*}
where $e=(\lvert S(\ker\Delta,\mathcal{C})\rvert-n)/\dim\mathcal{C}$.
\end{thm}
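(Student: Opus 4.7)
The plan is to adapt the gadget-based construction of Rytíř \cite{rytir2} from binary codes to this more general setting by exploiting the cyclic-group hypothesis on the basis. I would first normalize each basis vector $b \in B$: fix a generator $g_b$ of the cyclic subgroup $G(b)$ and write $b = g_b \cdot (m_1^b, \ldots, m_n^b)$ with integer coefficients $m_i^b$ (well defined modulo $|G(b)|$ when $G(b)$ is finite). This reduces the task of representing $b$ as a vector in the kernel of a triangular configuration to a combinatorial problem involving only integer data attached to triangles.

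Second, for each $b \in B$ I would construct a gadget triangular configuration $\Delta_b$ whose kernel over $\mathbb{F}$ is one-dimensional, generated by a codeword that, after puncturing all but a distinguished set of $n$ ``output'' triangles, equals $b$. The basic building blocks are strip-like chains of triangles glued edge-to-edge, which propagate a field value with sign flips controlled by how shared edges align; these are bundled so that $|m_i^b|$ copies of $\pm g_b$ feed into the $i$-th output triangle and sum to the required $m_i^b g_b$. Every non-output edge must be covered by a second triangle (attached on the ``outside'' of the gadget) so that it does not force a constraint collapsing the kernel to zero. The final complex $\Delta$ is obtained by identifying the $n$ output triangles across all gadgets and otherwise placing the $\Delta_b$ in general position in $\mathbb{R}^d$ for $d$ large enough that all simplices intersect only in shared faces. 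Linear independence of $B$ together with the isolation of gadgets forces $\ker \Delta$ to decompose as the direct sum of the $\ker \Delta_b$, so its projection onto the output triangles is precisely $\mathcal{C}$.

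Third, the bijection between $\ker \Delta$ and $\mathcal{C}$ is the puncturing map restricted to the output coordinates, and is a linear isomorphism by construction. To see that it preserves minimal codewords, I would argue that within each gadget the support of every nonzero kernel element is determined entirely by which output triangles it activates, so support-containment in $\ker \Delta$ coincides with support-containment in $\mathcal{C}$. For the finite case, the weight enumerator identity follows from a uniform counting: each gadget activates a fixed number of auxiliary (i.e.\ $S$-indexed) triangles per active output coordinate, and this fixed number is the quantity $e$ appearing in the theorem. A codeword of $\mathcal{C}$ of weight $w$ then lifts to a codeword of $\ker \Delta$ whose total weight is an affine function of $w$ with slope $e$; inverting this relation yields the claimed substitution of $x^{(i \bmod e)/2}$ for $x^i$.

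The hardest step will be designing the gadgets so that (i) no non-output edge is left uncovered (which would force a trivial kernel) and no spurious codewords appear after combining gadgets, and (ii) the per-coordinate auxiliary overhead is exactly uniform across coordinates and across basis vectors, so the exponent shift in the weight enumerator is well-defined. Achieving uniformity may require padding gadgets with additional triangles so that every $\Delta_b$ contributes the same multiplicity regardless of the entries $m_i^b$, and the choice of generator $g_b$ (infinite cyclic in characteristic zero versus cyclic of prime order in positive characteristic) may slightly change the internal chains of each gadget but should leave the overall architecture intact.
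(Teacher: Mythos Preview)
Your overall architecture---one gadget $\Delta_b$ per basis vector with one-dimensional kernel, all glued along a common set of $n$ output triangles---matches the paper, and your argument that the puncturing map is a linear bijection preserving minimal supports is essentially the right one. However, your analysis of the weight enumerator contains a genuine gap.

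You write that ``each gadget activates a fixed number of auxiliary triangles per active output coordinate'' and that the lifted weight is ``an affine function of $w$ with slope $e$''. This is not how the arithmetic works, and no inversion of such a relation produces the substitution $i \mapsto (i \bmod e)/2$. In the correct construction, when a codeword $c = \sum_{b\in B} \alpha_b b$ is lifted, every gadget $\Delta_b$ with $\alpha_b \neq 0$ contributes \emph{all} of its auxiliary triangles to the support of $f(c)$, regardless of $w(b)$ or $w(c)$. The overhead is therefore $e$ times the \emph{degree} $\deg(c)$ (the number of nonzero $\alpha_b$), not anything proportional to $w(c)$: one has $w(f(c)) = w(c) + \deg(c)\cdot e$. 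The $\bmod\ e$ step recovers $w(c)$ only because the gadgets are first padded (``balanced'') so that $e$ exceeds the code length, forcing $0 \le w(c) < e$ and hence $w(f(c)) \bmod e = w(c)$. Your uniformity requirement (ii) should be stated per basis vector, not per coordinate, and achieving it is exactly the balancing step you allude to but do not carry out.

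You also do not account for the division by $2$ in the exponent. In the paper this comes from first passing to the double code $\mathcal{C}^2 = \{(c,c) : c \in \mathcal{C}\}$, which is automatically even; the balancing procedure needs evenness to adjust gadget sizes by the available increments. One builds $\Delta$ as the representation of $\mathcal{C}^2$ (so the set $S$ also absorbs the $n$ duplicated coordinates, which is why the formula for $e$ has the extra $-n$), applies the $\bmod\ e$ argument to obtain $W_{\mathcal{C}^2}$, and then uses $W_{\mathcal{C}}(x) = W_{\mathcal{C}^2}(x^{1/2})$. Without this doubling step neither the parity needed for balancing nor the factor $1/2$ in $(i \bmod e)/2$ has a source in your argument.
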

\begin{cor}
\label{thm:repr2}
The conclusion of Theorem~\ref{thm:repr1} holds for the linear codes over rationals and over $GF(p)$, where $p$ is a prime.
\end{cor}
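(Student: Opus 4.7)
The plan is to derive the corollary directly from Theorem~\ref{thm:repr1} by verifying its hypothesis in the two cases, namely, that every linear code over $\BQ$ or over $GF(p)$ admits a basis $B$ in which, for each $b\in B$, all entries of $b$ lie in a cyclic subgroup of the additive group of the field.

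For the case $\mathbb{F}=GF(p)$, I would observe that the whole additive group $(GF(p),+)$ is cyclic of order $p$, generated by $1$. Hence for any basis $B$ of $\mathcal{C}\subseteq GF(p)^n$ one may take $G(b)=GF(p)$ for every $b\in B$; the hypothesis of Theorem~\ref{thm:repr1} is trivially satisfied, and since $\mathcal{C}$ is automatically finite the weight-enumerator formula is included in the conclusion.

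For the case $\mathbb{F}=\BQ$, I would argue that although $(\BQ,+)$ itself is not cyclic, any finite subset of $\BQ$ is contained in a cyclic subgroup. Concretely, fix an arbitrary basis $B$ of $\mathcal{C}\subseteq\BQ^n$; for each $b=(b^1,\dots,b^n)\in B$, let $d_b$ be a positive common denominator of $b^1,\dots,b^n$, so that every entry of $b$ lies in $\tfrac{1}{d_b}\BZ$, which is the cyclic subgroup of $(\BQ,+)$ generated by $1/d_b$. Set $G(b)=\tfrac{1}{d_b}\BZ$; the hypothesis of Theorem~\ref{thm:repr1} holds, yielding the triangular representation (the additional enumerator statement is vacuous here since a nonzero rational code is infinite).

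There is essentially no obstacle: the corollary is a direct specialization of Theorem~\ref{thm:repr1}, and the only thing to check is the elementary fact that finitely many rationals share a common denominator, together with the cyclicity of $GF(p)$. The real content lies entirely in the proof of Theorem~\ref{thm:repr1}.
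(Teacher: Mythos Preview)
Your proof is correct and follows essentially the same approach as the paper: verify the cyclic-subgroup hypothesis of Theorem~\ref{thm:repr1} in each case and invoke that theorem. The only cosmetic difference is that for $\BQ$ the paper rescales each basis vector to make it integral (so $G(b)=\BZ$), whereas you leave $b$ unchanged and take $G(b)=\tfrac{1}{d_b}\BZ$; both arguments are equivalent.
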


Corollary~\ref{thm:repr2} was proved for $GF(2)$ in Rytíř~\cite{rytir2}.
On the other hand, we have the following negative results for the remaining fields.

\begin{thm}
\label{thm:nonrepr}
Let $\mathcal{C}$ be a linear code over a field $\mathbb{F}$ such that every basis $B$ of $\mathcal{C}$ contains a vector $b$ so that its entries $f,p$ do not belong to the same cyclic subgroup of the additive group of $\mathbb{F}$. Then code $\mathcal{C}$ is not triangular representable.
\end{thm}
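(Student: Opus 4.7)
The plan is to argue by contradiction. Suppose $\mathcal{C}$ is triangular representable by some triangular configuration $\Delta$ and puncturing set $S$. I will produce a basis of $\mathcal{C}$ in which every vector has all its entries contained in a single cyclic subgroup of $(\mathbb{F},+)$, directly contradicting the hypothesis.

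The key structural claim is that every minimal codeword of $\ker\Delta$ has all nonzero entries inside a single cyclic subgroup of $(\mathbb{F},+)$. Fix such a minimal codeword $x$ with support $T$, and let $V_T$ denote the subspace of $\ker\Delta$ consisting of codewords vanishing outside $T$. Minimality forces every nonzero vector of $V_T$ to have support exactly $T$; indeed, if $\dim V_T \geq 2$, then two independent $y,z\in V_T$ give a nonzero combination $y+\alpha z$ (with $\alpha=-y_i/z_i$ at a chosen $i\in T$) that vanishes at coordinate $i$, contradicting full support. Hence $\dim V_T = 1$. Because the incidence submatrix $A|_T$ has entries in $\{0,1\}\subseteq \mathbb{F}_0$, where $\mathbb{F}_0$ is the prime subfield of $\mathbb{F}$, its kernel over $\mathbb{F}$ is the base change of its kernel over $\mathbb{F}_0$ and is therefore also one-dimensional. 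Pick a spanning vector $y\in \mathbb{F}_0^T$; in characteristic zero, clear denominators so that $y\in \mathbb{Z}^T$. Writing $x=\alpha y$ for some $\alpha\in \mathbb{F}^*$ places every entry of $x$ in $\mathbb{Z}\cdot\alpha$, the cyclic subgroup of $(\mathbb{F},+)$ generated by $\alpha$.

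The existence of a linear bijection between $\mathcal{C}$ and $\ker\Delta$ forces $\dim\mathcal{C}=\dim\ker\Delta$, so the surjective puncturing map $\pi\colon\ker\Delta\to\mathcal{C}$ is a linear isomorphism and each codeword of $\mathcal{C}$ equals $\pi(y)$ for a unique $y\in\ker\Delta$, with entries of $\pi(y)$ forming a subset of those of $y$. Applied to minimal codewords of $\ker\Delta$, this produces codewords of $\mathcal{C}$ whose entries all lie in a single cyclic subgroup. Invoking the folklore fact that minimal codewords span any linear code (inductively, a non-minimal $c$ has some $c'$ with strictly smaller support sharing a nonzero coordinate with $c$, and canceling that coordinate yields a codeword with strictly smaller support than $c$), $\ker\Delta$ is generated by its minimal codewords, and hence $\mathcal{C}$ is generated by vectors whose entries lie in single cyclic subgroups; extracting a basis from this generating set yields the desired contradiction. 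The main obstacle is the structural claim, specifically the descent from a one-dimensional kernel over $\mathbb{F}$ to a spanning vector with $\mathbb{Z}$-valued (equivalently, $\mathbb{F}_0$-valued) entries; this depends on the $\{0,1\}$-nature of the incidence matrix of any triangular configuration, which is precisely the rigidity that the hypothesis of the theorem rules out.
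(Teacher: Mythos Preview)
Your proof is correct and takes a genuinely different route from the paper's.

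The paper argues as follows. It views $\mathbb{F}$ as a vector space over its prime subfield $\mathbb{P}$, fixes a $\mathbb{P}$-coordinate decomposition, and defines two $\mathbb{P}$-linear projections $[\,\cdot\,]^1$ and $[\,\cdot\,]^{2+}$ on $\mathbb{F}$ (first coordinate versus the rest). Because the incidence matrix has $\{0,1\}$ entries, these projections, applied entrywise, send $\ker\Delta$ to itself. One then takes a basis $B$ of $\mathcal{C}$ minimizing the number of ``bad'' vectors, pushes it through the puncturing isomorphism to a basis of $\ker\Delta$, splits a bad basis vector $g(b)$ as $[g(b)]^1+[g(b)]^{2+}$, and replaces $g(b)$ by whichever piece keeps the set a basis. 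Pulling back gives a basis of $\mathcal{C}$ with strictly fewer bad vectors, a contradiction.

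Your argument instead isolates the structure of minimal codewords: for a minimal $x\in\ker\Delta$ with support $T$, the kernel of the $\{0,1\}$-submatrix $A|_T$ is one-dimensional over $\mathbb{F}$, hence one-dimensional over the prime subfield, so $x$ is a scalar multiple of a prime-subfield (after clearing denominators, integer) vector and its entries lie in a single cyclic subgroup $\mathbb{Z}\alpha$. Since minimal codewords span $\ker\Delta$, their images under the puncturing isomorphism span $\mathcal{C}$, yielding a basis with no bad vectors.

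Both proofs use only the equality $\dim\mathcal{C}=\dim\ker\Delta$ together with $\mathcal{C}=\ker\Delta/S$, and both hinge on the fact that $A$ has entries in the prime subfield. The paper's exchange argument is more hands-on and local (one bad vector at a time), while yours is more structural: it explains directly why \emph{every} minimal codeword of a triangular kernel already has the ``cyclic subgroup'' property, and then lets the standard fact that minimal codewords span do the rest. Your approach is arguably cleaner and makes the role of the $\{0,1\}$ incidence matrix more transparent; the paper's approach has the minor advantage of avoiding the (easy but separate) lemma that minimal codewords span.
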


\begin{cor}
\label{cor:nonrepr}
Let $\mathbb{F}$ be a field different from rationals and $GF(p)$, where $p$ is a prime. Then there exists a linear code over $\mathbb{F}$ that is not triangular representable.
\end{cor}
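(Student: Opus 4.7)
The plan is to invoke Theorem~\ref{thm:nonrepr} on a cleverly chosen one-dimensional code $\mathcal{C}=\linspan{(a,b)}\subseteq\mathbb{F}^2$. The whole argument reduces to two small tasks: (i) exhibit a pair $(a,b)\in\mathbb{F}^2$ whose two entries do not lie in any common cyclic subgroup of the additive group $(\mathbb{F},+)$; and (ii) check that this ``non-cyclic'' property is inherited by every nonzero scalar multiple $\lambda(a,b)$, since those account for all bases of a one-dimensional code.

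For task (i) I would split on the characteristic of $\mathbb{F}$. If $\operatorname{char}\mathbb{F}=0$, then $\mathbb{F}\supsetneq\BQ$, so I pick $a=1$ and $b=\alpha$ for any $\alpha\in\mathbb{F}\setminus\BQ$; if both $1=ng$ and $\alpha=mg$ held for some $g\in\mathbb{F}$ and integers $n,m$ with $n\neq 0$, then $\alpha=m/n\in\BQ$, a contradiction. If $\operatorname{char}\mathbb{F}=p>0$, then $\mathbb{F}\supsetneq GF(p)$, so I pick $a=1$ and $b=\alpha$ for any $\alpha\in\mathbb{F}\setminus GF(p)$; every nonzero cyclic subgroup of $(\mathbb{F},+)$ has order $p$, so the assumption $1=ng$ forces $g=n^{-1}\in GF(p)$ and hence $\alpha=mg\in GF(p)$, again a contradiction.

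For task (ii), any basis of $\mathcal{C}=\linspan{(a,b)}$ is $\{\lambda(a,b)\}$ with $\lambda\in\mathbb{F}\setminus\{0\}$. If the entries $\lambda a$ and $\lambda b$ both lay in some $\langle g\rangle$, say $\lambda a=ng$ and $\lambda b=mg$, then $a=n(g/\lambda)$ and $b=m(g/\lambda)$ would put $a,b$ in $\langle g/\lambda\rangle$, contradicting the choice made in step (i). Hence the hypothesis of Theorem~\ref{thm:nonrepr} is satisfied, and $\mathcal{C}$ is not triangular representable. I do not expect a genuine obstacle here; the only mildly subtle point is that in positive characteristic, cyclic subgroups of $(\mathbb{F},+)$ are so small (order dividing $p$) that any two elements lying in a common one are automatically forced into the prime subfield after normalization, which is exactly what drives the contradiction in case B.
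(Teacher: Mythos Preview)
Your proof is correct and follows essentially the same approach as the paper: both take the one-dimensional code $\mathcal{C}=\linspan{\{(p,f)\}}$ with $p$ in the prime subfield $\mathbb{P}$ and $f\in\mathbb{F}\setminus\mathbb{P}$, then invoke Theorem~\ref{thm:nonrepr}. The paper's argument is terser---it simply asserts the applicability of Theorem~\ref{thm:nonrepr}---whereas you spell out explicitly why $p,f$ cannot share a cyclic subgroup (your task~(i)) and why this persists under scaling (your task~(ii)); this extra care is justified, since the hypothesis of Theorem~\ref{thm:nonrepr} concerns \emph{every} basis.
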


In Rytíř~\cite{rytir2}, for every triangular configuration $\Delta$, we constructed a triangular configuration $\Delta'$ such that there exists a bijection between the cycle space over $GF(2)$ of $\Delta$ and the set of the perfect matchings of $\Delta'$. We conjecture that this result can be extended to other finite fields.
Finally, see Section~\ref{sec:multi} for the multivariate versions of the above theorems.

\subsection{Potts model}
We give an application (suggested by Martin Loebl) of our results to the Potts partition function. The \dfn{q-Potts partition function} of a graph $G$ is defined according to the formula
$$P^q(G,x)=\sum_{s:V\mapsto \{0,\dots,q-1\}}x^{H(s)}.$$
The Hamiltonian $H(s)$ is defined as $\sum_{uv\in E} w(uv)\delta(s(u),s(v))$ where $\delta$ is Kronecker delta and $w(uv)$ is the weight of edge $uv$.
The Potts partition function is one of the extensively studied functions in statistical physics; see the book by Loebl~\cite{loeblbook}. We note that 2-Potts partition function is equivalent to the Ising partition function and that $P^q(G,x)$ is also called the multivariate Tutte polynomial of $G$; see the survey by Sokal~\cite{sokal}. 
\begin{thm}
\label{cor:potts}
Let $G$ be a connected graph such that all edges of $G$ have the same weight $w$. Let $q$ be a prime. Then there exists triangular configuration $\Delta$ such that if $\sum_{i=0}^{m}a_ix^i$ is the weight enumerator of $\ker\Delta$ then
\begin{equation*}
P^q(G,x)=\frac{q}{\prod_{uv\in E} x^{-w}}\sum_{i=0}^{m}a_ix^{-w((i\bmod e)/2)},
\end{equation*}
where $e$ is a positive integer linear in the number of edges of $G$.
\end{thm}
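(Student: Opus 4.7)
The plan is to reduce the $q$-Potts partition function to the weight enumerator of a linear code over $GF(q)$ and then invoke Corollary~\ref{thm:repr2}. Fix an arbitrary orientation of $G$ and let $O_G$ denote the oriented incidence matrix of $G$ over $GF(q)$. Identifying an assignment $s:V\to\{0,\dots,q-1\}$ with a vector in $GF(q)^V$, set $v_s=s^{T}O_G\in GF(q)^{E}$; then $v_s(uv)=s(v)-s(u)\pmod q$ for an edge oriented from $u$ to $v$, which vanishes iff $s(u)=s(v)$. Because all edges have the same weight $w$, this gives $H(s)=w(|E|-w(v_s))$. Since $G$ is connected, the map $s\mapsto v_s$ has kernel equal to the constant functions on $V$, so each vector in the row space $R$ of $O_G$ is hit by exactly $q$ assignments. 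Therefore
\begin{equation*}
P^q(G,x)=\sum_{s}x^{H(s)}=q\cdot x^{w|E|}\sum_{v\in R}x^{-w\,w(v)}=q\cdot x^{w|E|}\cdot W_R(x^{-w}).
\end{equation*}

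Since $q$ is prime, Corollary~\ref{thm:repr2} applies to $R\subseteq GF(q)^{|E|}$ and produces a triangular configuration $\Delta$ whose weight enumerator $W_{\ker\Delta}(y)=\sum_{i=0}^{m}a_i y^{i}$ satisfies $W_R(y)=\sum_{i=0}^{m}a_i y^{(i\bmod e)/2}$, with $e=(\lvert S(\ker\Delta,R)\rvert-|E|)/\dim R$. Substituting $y=x^{-w}$ and using the identity $\prod_{uv\in E}x^{-w}=x^{-w|E|}$ converts the prefactor $q\cdot x^{w|E|}$ into $q/\prod_{uv\in E}x^{-w}$, yielding exactly the claimed formula.

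It remains to verify that $e$ is a positive integer linear in $|E|$. Since $\dim R=|V|-1\le |E|$, this reduces to showing that the ambient length $\lvert S(\ker\Delta,R)\rvert$ of $\ker\Delta$ grows linearly with the original code length $|E|$. This should follow from the construction underlying Theorem~\ref{thm:repr1}: each of the $\dim R$ basis vectors of $R$ is encoded by a triangular gadget whose number of triangles is bounded by a constant depending only on $q$, so the total puncturing set has size $O(|E|)$, and divisibility by $\dim R$ is built into that construction. The main obstacle is not conceptual: the bulk of the argument is delivered directly by Theorem~\ref{thm:repr1}, and what remains is careful bookkeeping around the substitution $y\leftarrow x^{-w}$, the factor $q$ coming from constant shifts, and the $(i\bmod e)/2$ exponent.
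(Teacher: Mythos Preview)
Your argument is correct and coincides with the paper's: the paper obtains Theorem~\ref{cor:potts} as the single-weight case of Theorem~\ref{cor:multpotts}, whose proof performs exactly your reduction of $P^q(G,x)$ to $q$ times the weight enumerator of the row space of $O_G$ (a step it attributes to Galluccio--Loebl) and then invokes the triangular representation. One correction to your closing paragraph: in the construction underlying Theorem~\ref{thm:repr1} the gadget $\Delta^{\mathcal{C}}_b$ for a single basis vector already has $\Theta(|E|)$ triangles (Proposition~\ref{prop:multispherespace} is applied with multiplicity parameter equal to the code length), so $e$ comes out linear in $|E|$ because each balanced piece $\Delta^{\mathcal{C}}_b$ does, not because the gadgets have size $O_q(1)$.
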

The case of different weights is treated in Section~\ref{sec:multi}.

\section{Triangular representations}

In this section all computations are done over a fixed field $\mathbb{F}$.
Let $\Delta_1$, $\Delta_2$ be subconfigurations of a triangular configuration $\Delta$.
We denote the set of vertices of $\Delta$ by $V(\Delta)$, the set of edges by $E(\Delta)$ and the set of triangles by $T(\Delta)$.
The \dfn{difference} of $\Delta_1$ and $\Delta_2$, denoted by $\Delta_1-\Delta_2$, is defined to be the triangular configuration obtained from $V(\Delta_1)\cup E(\Delta_1)\cup T(\Delta_1)\setminus T(\Delta_2)$ by removing the edges and vertices that are not contained in any triangle in $T(\Delta_1)\setminus T(\Delta_2)$.
An \dfn{abstract simplicial complex} on a finite set $V$ is a family $\Delta$ of subsets of $V$ closed under taking subsets.
Let $X$ be an element of $\Delta$. The \dfn{dimension} of $X$ is $\left\lvert X\right\rvert-1$, denoted by $\dim X$.
The dimension of $\Delta$ is $\max\left\lbrace\dim X \vert X\in\Delta\right\rbrace$.
Every simplicial complex defines an abstract simplicial complex on the set of vertices $V$, namely the family of sets of vertices of simplexes of $\Delta$. We denote this abstract simplicial complex by $\mathcal{A}(\Delta)$.
The \dfn{geometric realization} of an abstract simplicial complex $\Delta$ is a simplicial complex $\Delta'$ such that $\Delta=\mathcal{A}(\Delta')$.
It is well known that every finite $d$-dimensional abstract simplicial complex can be realized as a simplicial complex in $\mathbb{R}^{2d+1}$. We choose a geometric realization of an abstract simplicial complex $\Delta$ and denote it by $\mathcal{G}(\Delta)$.
Let $\Delta_1,\Delta_2$ be triangular configurations. The union of $\Delta_1,\Delta_2$ is defined to be $\Delta_1\cup\Delta_2:=\mathcal{G}(\mathcal{A}(\Delta_1)\cup\mathcal{A}(\Delta_2))$.
We start with construction of basic building blocks.
\subsection{Triangular configuration $B^n$}
\label{sec:bn}
The triangular configuration $B^n$ consists of $n$ disjoint triangles as is depicted in Figure~\ref{fig:trianconf B}. We denote the triangles of $B^n$ by $B^n_1,\dots,B_n^n$.

\begin{figure}[h]
\begin{center}
 \includegraphics[width=150pt]{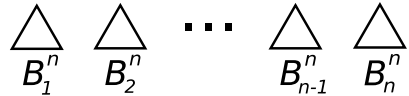}
\end{center}
\caption{Triangular configuration $B^n$.}
\label{fig:trianconf B}
\end{figure}

\subsection{Orientation}
\label{sec:orientation}
An \dfn{oriented triangular configuration} $\vec{\Delta}$ is a triangular configuration $\Delta$ with an assignment $o:T(\Delta)\mapsto\{+,-\}$ of signs to triangles.
We denote the subset of triangles $\{t\in T(\Delta)\vert o(t)=+\}$ by $\left[\vec{\Delta}\right]_+$ and $\{t\in T(\Delta)\vert o(t)=-\}$ by $\left[\vec{\Delta}\right]_-$.

\subsection{Oriented tunnel $\vec{T}$}
An \dfn{empty triangle} is a set of three edges forming a boundary of a triangle.

The \dfn{oriented tunnel} $\vec{T}$ is defined by Figure~\ref{fig:tunnel}.
\begin{figure}[h]
	\begin{center}
	\includegraphics[width=320pt]{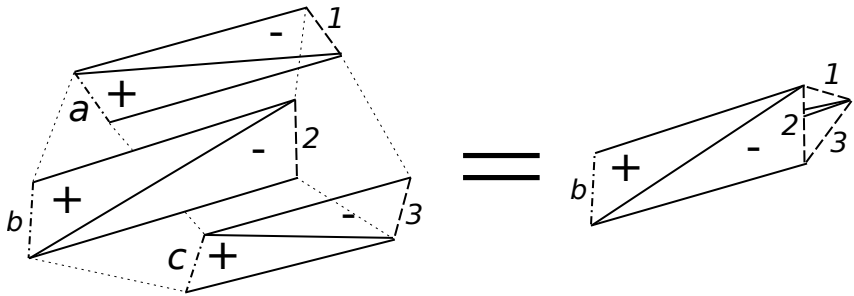}
	\end{center}
\caption{Oriented triangular tunnel $\vec{T}$.}
\label{fig:tunnel}
\end{figure}
The oriented tunnel has two empty triangles $\left\lbrace a,b,c\right\rbrace$ and $\left\lbrace 1,2,3\right\rbrace$.
The empty triangle $\left\lbrace a,b,c\right\rbrace$ is called \dfn{positive end} of the tunnel, and the empty triangle $\left\lbrace 1,2,3\right\rbrace$ is called \dfn{negative end}.

A \dfn{tunnel} is obtained from oriented tunnel by deleting the signs.

\begin{prop}
\label{prop:tunnel}
Let $T$ be a tunnel and let $v=\left(v^t\right)_{t\in T(T)}$ be a vector whose entries are indexed by triangles of $T$ satisfying: For each inner edge $e$,
$\sum_{e\in t}v^t=0$. Then 
$v^{t_a}=v^{t_b}=v^{t_c}$ and $v^{t_1}=v^{t_2}=v^{t_3}$,
where for $f\in\{a,b,c,1,2,3\}$, $t_f$ denotes the triangle of $T$ containing the edge $f$.
Moreover, $v^{t_a}=-v^{t_1}$. \qed
\end{prop}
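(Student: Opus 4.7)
The plan is to exploit the fact that, by inspection of Figure~\ref{fig:tunnel}, every inner edge of the tunnel $T$ belongs to exactly two triangles. The condition $\sum_{e\in t}v^{t}=0$ at an inner edge $e$ then reduces to $v^{t}+v^{t'}=0$ for the two triangles $t,t'$ containing $e$; equivalently, $v^{t}=-v^{t'}$ whenever $t$ and $t'$ share an inner edge.

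I would encode this as a walk in the dual graph $H$ of $T$, whose vertices are the triangles of $T$ and whose edges join pairs of triangles that meet in an inner edge. The above observation says that if $t$ and $t'$ are connected in $H$ by a walk of length $\ell$, then $v^{t}=(-1)^{\ell}v^{t'}$. Thus all the claimed relations will follow once one exhibits, in $H$, walks of appropriate parities between the relevant triangles: specifically, walks of even length between each pair in $\{t_{a},t_{b},t_{c}\}$, walks of even length between each pair in $\{t_{1},t_{2},t_{3}\}$, and a walk of odd length from $t_{a}$ (or equivalently $t_{b}$, $t_{c}$) to $t_{1}$ (or $t_{2}$, $t_{3}$).

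The existence of these walks is read directly off the tunnel structure shown in Figure~\ref{fig:tunnel}: going around the tube of triangles that connects one empty end to the other yields cyclic even-length walks among $t_{a},t_{b},t_{c}$ (and among $t_{1},t_{2},t_{3}$), while traversing the tube from the positive end to the negative end yields a walk whose length has the opposite parity, giving the sign flip $v^{t_{a}}=-v^{t_{1}}$.

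The only real obstacle is bookkeeping: one must check from the figure that (i) every inner edge is in exactly two triangles (so no extra constraints appear and the alternation rule is clean), and (ii) the dual graph is connected enough to provide the required walks of the stated parities. Once these combinatorial facts about the tunnel are confirmed, the proposition is immediate and uses nothing about the field $\mathbb{F}$ beyond the existence of additive inverses.
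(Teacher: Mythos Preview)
Your proposal is correct and is essentially the argument the paper has in mind: the paper omits the proof entirely (note the \qed immediately after the statement), treating it as immediate from inspection of Figure~\ref{fig:tunnel}, and your dual-graph parity argument is exactly the natural way to make that inspection rigorous.
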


\subsection{Linking triangles by oriented tunnel}

Let $\vec{\Delta}$ be an oriented triangular configuration. Let $t_1$ and $t_2$ be two edge disjoint triangles of $\vec{\Delta}$.

The \dfn{link} from $t_1$ to $t_2$ in $\vec{\Delta}$ is the oriented triangular configuration $\vec{\Delta'}$ defined as follows.
Let $\vec{T}$ be an oriented triangular tunnel as in Figure~\ref{fig:tunnel}. Let $t_1^1,t_1^2,t_1^3$ and $t_2^1,t_2^2,t_2^3$ be edges of $t_1$ and $t_2$, respectively.
We relabel edges of $\vec{T}$ such that $\left\{a,b,c\right\}=\left\{t_1^1,t_1^2,t_1^3\right\}$ and $\left\{1,2,3\right\}=\left\{t_2^1,t_2^2,t_2^3\right\}$. We let $\vec{\Delta'}:=\vec{\Delta}\cup \vec{T}$.

Note that, the link has a direction, the triangle $t_1$ is incident with the positive end of $\vec{T}$ in $\vec{\Delta'}$, and $t_2$ is incident with the negative end of $\vec{T}$ in $\vec{\Delta'}$.

Analogously, we define linking from $t_1$ to $t_2$ of edge disjoint $t_1,t_2$ where at least one of $t_1,t_2$ is an empty triangle of $\vec{\Delta}$.

Further, if $\Delta$ is (non-oriented) triangular configuration and $t_1,t_2$ are edge disjoint triangles, then link from $t_1$ to $t_2$ is defined as above, but replacing oriented tunnel by tunnel.

\subsection{Triangular configuration $\Delta^\mathcal{C}_B$}
\label{sec:representation}

Let $\mathcal{C}$ be a linear code of length $n$ and dimension $d$ over a field $\mathbb{F}$.
Let $B=\left\{b_1,\dots,b_d\right\}$ be a basis of $\mathcal{C}$.
We say that a basis $B$ is \dfn{representable} if for each $b\in B$ each entry of $b$ belongs to a cyclic subgroup $G(b)$ of the additive group of the field $\mathbb{F}$.
In this section we suppose that the linear code $\mathcal{C}$ has a representable basis $B$.
This is used in the key Proposition~\ref{prop:multispherespace} bellow.
We construct the triangular configuration $\Delta^C_B$ as follows.
First, for every basis vector $b$ we construct a triangular configuration $\Delta^C_{b}$.
Then, the triangular configuration $\Delta^\mathcal{C}_B$ is the union of $\Delta^\mathcal{C}_{b}$, $b\in B$.
\subsubsection{Triangular configuration $\Delta^\mathcal{C}_b$}

\begin{prop}
\label{prop:multispherespace}
Let $\mathbb{F}$ be a field. Let $a_1,a_2,\dots,a_k$ be a subset of distinct non-zero elements of a cyclic subgroup $G$ of the additive group of $\mathbb{F}$. Let $n$ be a positive integer. Then there exists a triangular configuration $\mathcal{M}$ such that $\dim\ker \mathcal{M} = 1$ and there exists a vector $v\in\ker\mathcal{M}$ having $a_1,a_2,\dots,a_k$ among its entries. Moreover, the vector $v$ contains each entry $a_i$ at least $n$ times and $v$ is non-zero on all entries.
\end{prop}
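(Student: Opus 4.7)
The plan is to assemble $\mathcal{M}$ by attaching ``multiplier'' book-gadgets to a large base configuration built from triangulated spheres linked by oriented tunnels; the key linking tool throughout is Proposition~\ref{prop:tunnel}.

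Let $g$ be a generator of $G$, so that $a_i=m_i g$ for integers $m_i$; by replacing the final $v$ with $-v$ if necessary we may assume every $m_i>0$. First I build a base configuration $\mathcal{B}$ whose kernel is $1$-dimensional and whose generator is non-zero on every triangle. The boundary of an octahedron works: its eight triangles admit a proper $2$-colouring of their adjacency graph (which is the $3$-cube, hence bipartite), and the $\pm g$ assignment given by the colouring satisfies every edge constraint $v^{T_1}+v^{T_2}=0$; a direct rank computation shows $\dim\ker=1$ over any field $\mathbb{F}$. Many disjoint octahedra can then be connected pairwise by oriented tunnels, yielding a base $\mathcal{B}$ with arbitrarily many $\pm g$-valued triangles while keeping $\dim\ker\mathcal{B}=1$, since by Proposition~\ref{prop:tunnel} each tunnel link adds as many new triangles as independent edge constraints.

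For each target value $a_i = m_i g$ I attach a \emph{book gadget} $\mathcal{G}_i$: an arrangement of $m_i+1$ triangles sharing one common edge $e_i$, consisting of a distinguished \emph{target} triangle $T^*_i$ together with $m_i$ \emph{spine} triangles $T_{i,1},\dots,T_{i,m_i}$. The edge constraint at $e_i$ reads
\begin{equation*}
v^{T^*_i}+\sum_{j=1}^{m_i} v^{T_{i,j}}=0,
\end{equation*}
so pinning each spine triangle to the value $g$ forces $v^{T^*_i}=-m_i g=-a_i$. A tunnel uses up all three edges of its endpoint, but each spine triangle already shares $e_i$ inside the book, so I cannot pin $T_{i,j}$ to $\mathcal{B}$ by a tunnel directly. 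Instead, along each of its two non-book side edges $f$, I glue an auxiliary triangle $S_f$ sharing exactly $f$ with $T_{i,j}$ (the edge constraint at $f$ forces $v^{S_f}=-v^{T_{i,j}}$), and then I link $S_f$ via an oriented tunnel to a fresh triangle of $\mathcal{B}$ of the appropriate sign. Taking $n$ disjoint copies of each $\mathcal{G}_i$ and attaching them to a sufficiently large $\mathcal{B}$ (enlarged by more octahedra-plus-tunnels as needed) produces $\mathcal{M}$; the resulting generator contains each $a_i$ at least $n$ times on the target triangles, and on every other triangle it has value $\pm g$ or $\pm m_j g$, hence is non-zero everywhere.

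The main obstacle is to verify $\dim\ker\mathcal{M}=1$ after all these attachments. This requires an inductive rank count ensuring that every new triangle introduces exactly one new independent edge constraint: tunnel links are handled by Proposition~\ref{prop:tunnel}; the book's shared edge $e_i$ contributes the single constraint that determines $T^*_i$ from the spine; and each auxiliary triangle is determined by the edge constraint at its absorbed side edge. The delicate part is the book-keeping around each book, where the two side edges per spine triangle must be routed separately through tunnels, so one must check that the new constraints introduced by the auxiliaries and their tunnels are linearly independent of the existing ones. Once this is arranged, the kernel dimension remains $1$ throughout, the generator has the claimed non-zero entries with every $a_i$ repeated at least $n$ times, and the proposition follows.
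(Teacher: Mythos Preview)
Your overall strategy---a base configuration carrying the value $g$ plus ``multiplier'' gadgets producing each $a_i$---is different from the paper's multisphere construction and is a reasonable idea, but the specific gadget you describe does not work. There are two places where free edges force values to zero.

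First, the target triangle $T^*_i$ in the book has two edges other than $e_i$, and you never say what is attached to them. If they are incident only to $T^*_i$, the corresponding rows of the incidence matrix force $v^{T^*_i}=0$ outright. Second, and more seriously, tunnelling the auxiliary $S_f$ forces $v^{T_{i,j}}=0$. After you link $S_f$ to a triangle of $\mathcal{B}$, the three tunnel-end triangles sit on the three edges of $S_f$ and, by Proposition~\ref{prop:tunnel}, carry a common value $w$. At the two edges of $S_f$ other than $f$, the only incident triangles are $S_f$ and a tunnel-end triangle, so the edge constraints give $v^{S_f}+w=0$. Substituting $w=-v^{S_f}$ into the constraint at $f$, where now $T_{i,j}$, $S_f$, and a tunnel-end triangle all meet, yields
\[
v^{T_{i,j}}+v^{S_f}+w \;=\; v^{T_{i,j}} \;=\; 0.
\]
Deleting $S_f$ does not help: then the two non-$f$ edges carry only the tunnel-end triangle, so $w=0$, and the constraint at $f$ again gives $v^{T_{i,j}}=0$. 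The underlying point is that the paper's tunnel-linking trick behaves well only when the triangle being linked already has \emph{all three} of its edges shared (as on a closed surface, which is exactly why the paper builds everything out of spheres and then deletes the linked triangles). Linking a triangle that has any dangling edge collapses values to zero. Your verification paragraph flags the bookkeeping as ``delicate'' but does not carry it out, and in fact it cannot be carried out for this construction; to salvage the approach you would need to close off every boundary edge of the book and of the auxiliaries before any tunnelling, at which point you are essentially rebuilding a closed surface.
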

The proof is postponed to Section~\ref{sec:proofprop}. We recall that $b$ is a basis vector of a representable basis $B$ and that the length of $\mathcal{C}$ is $n$. Let $b=(b^1,b^2,\dots,b^n)$ and let $a_1,a_2,\dots,a_k$ be all different entries of $b$.
Let $\mathcal{M}(b)$ and $v(b)=\left(v^t\right)_{t\in T(\mathcal{M}(b))}(b)$ be a triangular configuration and a vector provided by Proposition~\ref{prop:multispherespace}. Next, we construct $\Delta^\mathcal{C}_b$.
We proceed in three steps.
In the first step, the construction starts 
from $B^n\cup \mathcal{M}(b)$. See Section~\ref{sec:bn} for definition of $B^n$.

In the second step, we make the following tunnels.
Let $J(b)$ be the set of indices of non-zero entries of $b$.
Let $g$ be an injection $g:J(b)\mapsto T(\mathcal{M}(b))$ such that $\forall j\in J(b), v(b)^{g(j)}=b^j$. We note that $g$ exists since the multiplicity of each $a_i$ in $b$ is at most $n$. For each $j\in J(b)$ we create link by tunnel from the triangle $g(j)$ to the triangle $B^n_j$.

In the third step, we remove the triangles $g(j)$, $j\in J(b)$, from $\Delta^\mathcal{C}_{b}$.
An example of $\Delta^\mathcal{C}_{b}$ for $b=\left(b^1,0,\dots,b^{n-1},0\right)$ is depicted in Figure~\ref{fig:trianrep cycle}.

\begin{figure}[hp]
\begin{center}
 \includegraphics[width=150pt]{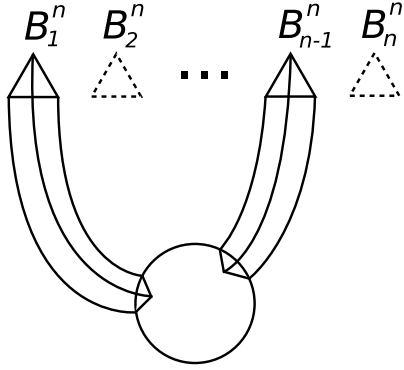}
\end{center}
\caption{$\Delta^\mathcal{C}_{b}$ represents a basis vector $\left(b^1,0,\dots,b^{n-1},0\right)$ of $\mathcal{C}$.}
\label{fig:trianrep cycle}
\end{figure}

\begin{prop}
\label{prop:gen}
The dimension of $\ker\Delta^\mathcal{C}_{b}$ is equal to $1$. Moreover,
there exists a generator $u(b)$ of $\ker\Delta^\mathcal{C}_{b}$ such that $u(b)^{B^n_j}=b^j$ for $j=1,\dots,n$ and $u(b)^t\neq 0$ for all $t\in T(\Delta^\mathcal{C}_{b}- B^n)$.

\end{prop}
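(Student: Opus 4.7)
The plan is to reduce the analysis of $\ker\Delta^\mathcal{C}_b$ to the $1$-dimensional $\ker\mathcal{M}(b)$ via the linking tunnels. Let $v\in\ker\Delta^\mathcal{C}_b$. For each $j\in J(b)$ denote by $\vec T_j$ the tunnel attached between the (removed) triangle $g(j)$ and $B^n_j$, with positive-end triangles $t^j_a,t^j_b,t^j_c$ and negative-end triangles $t^j_1,t^j_2,t^j_3$. Since every inner edge of $\vec T_j$ is inner in $\Delta^\mathcal{C}_b$ as well, Proposition~\ref{prop:tunnel} yields a common value $\alpha_j:=v^{t^j_a}=v^{t^j_b}=v^{t^j_c}=-v^{t^j_1}=-v^{t^j_2}=-v^{t^j_3}$.

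Next I would lift $v$ to a vector $\tilde v$ on $\mathcal{M}(b)$ by setting $\tilde v^t:=v^t$ for $t\in T(\mathcal{M}(b))\setminus g(J(b))$ and $\tilde v^{g(j)}:=\alpha_j$. At any edge $e$ of $\mathcal{M}(b)$ not belonging to some $g(j)$ the incidences in $\mathcal{M}(b)$ and in $\Delta^\mathcal{C}_b$ agree, so the $\mathcal{M}(b)$-kernel equation transfers verbatim. At an edge $e$ of $g(j)$, the $\Delta^\mathcal{C}_b$-equation is $v^{t^j_\star}+\sum_{t'\in T(\mathcal{M}(b))\setminus\{g(j)\},\,e\in t'}v^{t'}=0$, where $t^j_\star\in\{t^j_a,t^j_b,t^j_c\}$ is the tunnel triangle containing $e$; substituting $v^{t^j_\star}=\alpha_j=\tilde v^{g(j)}$ produces the $\mathcal{M}(b)$-equation at $e$. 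Hence $\tilde v\in\ker\mathcal{M}(b)$, and $\dim\ker\mathcal{M}(b)=1$ (Proposition~\ref{prop:multispherespace}) gives a scalar $\lambda\in\mathbb{F}$ with $\tilde v=\lambda v(b)$, so $\alpha_j=\lambda b^j$ for every $j\in J(b)$.

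The values of $v$ on $B^n$ and on the tunnel interiors are then forced. For $j\in J(b)$ each edge of $B^n_j$ is shared with exactly one of $t^j_1,t^j_2,t^j_3$, giving $v^{B^n_j}=-v^{t^j_1}=\alpha_j=\lambda b^j$. For $j\notin J(b)$ the triangle $B^n_j$ is isolated in $\Delta^\mathcal{C}_b$, so its edge equations force $v^{B^n_j}=0=\lambda b^j$. The interior of each tunnel is pinned down from the common end value $\alpha_j$ by the remaining inner-edge equations of $\vec T_j$. Thus $v$ is determined by $\lambda$, establishing $\dim\ker\Delta^\mathcal{C}_b\le 1$, and reading this description forward with $\lambda=1$ produces an explicit $u(b)\in\ker\Delta^\mathcal{C}_b$ with $u(b)^{B^n_j}=b^j$, so the dimension equals $1$.

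For non-vanishing on $T(\Delta^\mathcal{C}_b-B^n)$: on $T(\mathcal{M}(b))\setminus g(J(b))$ we have $u(b)^t=v(b)^t\neq 0$ by Proposition~\ref{prop:multispherespace}, and on each tunnel $\vec T_j$ the non-zero end value $\alpha_j=b^j$ propagates to every interior triangle via the tunnel's internal structure. The main technical subtlety is the third construction step: deleting $g(j)$ does not collapse the kernel because each edge of $g(j)$ is still incident to at least one triangle of $\mathcal{M}(b)\setminus\{g(j)\}$ (otherwise $\ker\mathcal{M}(b)$ could not be spanned by a nowhere-zero vector) and to the tunnel triangle $t^j_\star$, so the equation at $e$ merely transfers the former role of $v^{g(j)}$ onto $v^{t^j_\star}$; this redistribution is precisely the content of the lifting $v\mapsto\tilde v$ used above.
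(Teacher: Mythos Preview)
Your argument is correct and follows essentially the same route as the paper: both reduce $\ker\Delta^\mathcal{C}_b$ to the one-dimensional $\ker\mathcal{M}(b)$ by way of Proposition~\ref{prop:tunnel} and then push the generator $v(b)$ of Proposition~\ref{prop:multispherespace} back through the tunnels to obtain $u(b)$. The only difference is one of explicitness: where the paper simply asserts that attaching tunnels and deleting the $g(j)$'s preserves the kernel dimension, you actually construct the lift $v\mapsto\tilde v$ and verify the edge equations at the boundary of each removed triangle, which is exactly the calculation hiding behind the paper's one-line ``Hence, the dimension of $\ker\Delta^\mathcal{C}_b$ is $1$.''
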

\begin{proof}
By Proposition~\ref{prop:multispherespace}, the dimension of $\ker\mathcal{M}(b)$ is $1$. The triangular configuration $\Delta^\mathcal{C}_b$ is obtained from $\mathcal{M}(b)\cup B^n$ by linking some disjoint triangles of $B^n$ by tunnel to some triangles of $\mathcal{M}(b)$ and by removing triangles of $\mathcal{M}(b)$ that are linked by a tunnel. Hence, the dimension of $\ker\Delta^\mathcal{C}_b$ is $1$.

Let $u$ be a vector from $\ker\mathcal{M}(b)$ given by Proposition~\ref{prop:multispherespace}. If entry $b^j$ is non-zero, in construction of $\mathcal{M}(b)$ we link the triangle $B^n_j$ with a triangle $t$ of $T(\mathcal{M}(b))$ such that $u^t=b_j$ and then we remove triangle $t$. Therefore by Proposition~\ref{prop:tunnel}, we can extend vector $u$ to $u(b)$ such that $u(b)^{B^n_j}=b^j$. If entry $b^k$ is zero, then triangle $B^n_j$ is isolated. Therefore, $u(b)^{B^n_j}=0$.

The vector $u$ given by Proposition~\ref{prop:multispherespace}, is non-zero on all entries. By Proposition~\ref{prop:tunnel}, all entries indexed by triangles of tunnels linked to $\mathcal{M}(b)$ are non-zero. Hence, $u(b)^t\neq 0$ for all $t\in T(\Delta^\mathcal{C}_{b}- B^n)$.
\end{proof}

\subsubsection{Construction of $\Delta^\mathcal{C}_B$}
\label{sec:constdeltaCB}

Triangular configurations $\Delta^\mathcal{C}_{b}$, $b\in B$, share only triangles of $B^n$. Hence, $\mathcal{A}(\Delta^\mathcal{C}_{b})\cap\mathcal{A}(\Delta^\mathcal{C}_{b'})=\mathcal{A}(B_n)$ holds for $b\neq b'$; $b,b'\in B$.

The triangular configuration $\Delta^\mathcal{C}_B$ is the union of $\Delta^\mathcal{C}_{b}$, $b\in B$.
An example of a triangular configuration $\Delta^\mathcal{C}_B$ is depicted in Figure~\ref{fig:trianrep}.



We define the following vectors of $\ker\Delta^\mathcal{C}_B$. The vector $\gen{\Delta^\mathcal{C}_{b}}$ is defined as $\gen{\Delta^\mathcal{C}_{b}}^t:=u(b)^t$ for $t\in T(\Delta^\mathcal{C}_{b})$ and $\gen{\Delta^\mathcal{C}_{b}}^t:=0$ for $t\in T(\Delta^\mathcal{C}_B-\Delta^\mathcal{C}_{b})$.
As a corollary of Proposition~\ref{prop:gen} we get:
\begin{cor}
\label{cor:genentries}
Each entry of vector $\gen{\Delta^\mathcal{C}_{b}}$ is non-zero on each entry indexed by a triangle of $\Delta^\mathcal{C}_{b}- B^n$.
Moreover, the vectors $\gen{\Delta^\mathcal{C}_{b}}$, $b\in B$, are linearly independent.\qed
\end{cor}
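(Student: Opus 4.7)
The proof is essentially a bookkeeping exercise that combines Proposition~\ref{prop:gen} with the disjointness property of the tunnels built for distinct basis vectors, so I plan to keep it short.

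The first assertion is immediate: by construction $\gen{\Delta^\mathcal{C}_b}^t = u(b)^t$ for every $t \in T(\Delta^\mathcal{C}_b)$, and Proposition~\ref{prop:gen} gives $u(b)^t \neq 0$ whenever $t \in T(\Delta^\mathcal{C}_b - B^n)$. So I would dispatch this in one sentence.

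For linear independence I would argue by isolating, for each basis vector $b \in B$, a triangle on which only $\gen{\Delta^\mathcal{C}_b}$ is non-zero. The key structural fact is the one recorded just before the corollary: the configurations $\Delta^\mathcal{C}_b$ and $\Delta^\mathcal{C}_{b'}$, for $b \neq b'$, share only (abstract) triangles of $B^n$. Consequently the triangle sets $T(\Delta^\mathcal{C}_b - B^n)$, $b \in B$, are pairwise disjoint. Because $b$ is a basis vector it is non-zero, so at least one tunnel is attached in the construction of $\Delta^\mathcal{C}_b$, whence $T(\Delta^\mathcal{C}_b - B^n)$ is non-empty; fix some $t_b \in T(\Delta^\mathcal{C}_b - B^n)$.

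Now suppose $\sum_{b \in B} \lambda_b \gen{\Delta^\mathcal{C}_b} = 0$. Evaluating at the coordinate indexed by $t_b$: for $b' \neq b$ the triangle $t_b$ does not lie in $\Delta^\mathcal{C}_{b'}$, so $\gen{\Delta^\mathcal{C}_{b'}}^{t_b} = 0$ by the definition of $\gen{\cdot}$; while $\gen{\Delta^\mathcal{C}_b}^{t_b} = u(b)^{t_b} \neq 0$ by the first part. Hence $\lambda_b \cdot u(b)^{t_b} = 0$, so $\lambda_b = 0$. Since $b$ was arbitrary, the vectors are linearly independent. I do not anticipate any real obstacle; the only thing to be a little careful about is ensuring $T(\Delta^\mathcal{C}_b - B^n) \neq \emptyset$, which follows because $b \neq 0$ forces the construction to add at least one tunnel.
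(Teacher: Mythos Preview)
Your argument is correct. The paper gives no explicit proof for this corollary---it is stated with a \qed\ as an immediate consequence of Proposition~\ref{prop:gen}---so your write-up simply spells out what the paper leaves implicit.

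One minor remark: for the linear independence you invoke the disjointness of the triangle sets $T(\Delta^\mathcal{C}_b - B^n)$, which is fine, but an even shorter route is available directly from Proposition~\ref{prop:gen}: since $\gen{\Delta^\mathcal{C}_b}^{B^n_j}=b^j$ for all $j$, a relation $\sum_b \lambda_b \gen{\Delta^\mathcal{C}_b}=0$ restricted to the $B^n$-coordinates gives $\sum_b \lambda_b b = 0$, and linear independence follows because $B$ is a basis of $\mathcal{C}$. This is likely the one-line argument the paper has in mind, though your approach is equally valid.
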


\begin{definition}
\label{dfn:mappingf}
We define a linear mapping $f\colon\mathcal{C}\mapsto \ker\Delta^\mathcal{C}_B$ in the following way:
Let $c$ be a codeword of $\mathcal{C}$ and let $c=\sum_{b\in B}\alpha_bb$ be the unique expression of $c$, where $\alpha_b\in\mathbb{F}$.
We define $f(c):=\sum_{b\in B}\alpha_b\gen{\Delta^\mathcal{C}_{b}}$.
The entries of $f(c)$ are indexed by the triangles of $\Delta^\mathcal{C}_B$.
Let $R=\{1,\dots,n\}$ be the set of coordinates of $\mathcal{C}$. We define an injection $\mu:R\mapsto T(\Delta)$ as: $\mu(i)=B^n_i$ for $i=1,\dots,n$.
\end{definition}

\begin{figure}[hp]
\begin{center}
 \includegraphics[width=150pt]{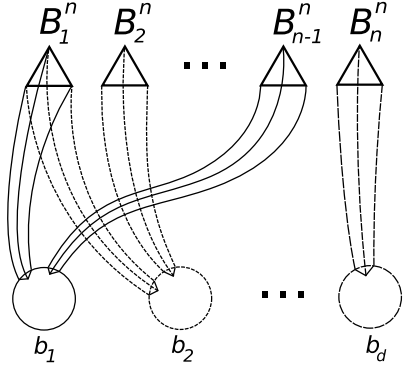}
\end{center}
\caption{An example of a triangular configuration $\Delta^\mathcal{C}_B$, where $B=\{b_1,\dots,b_d\}$.}
\label{fig:trianrep}
\end{figure}

\begin{prop}
\label{prop:mappingf}
Denote $\left\lvert T\left(\Delta^\mathcal{C}_{B}\right) \right\rvert$ by $m$.
Let $c=(c^1,\dots,c^n)$ and
\begin{equation*}
f(c)=\left(f(c)^{B^n_1},\dots,f(c)^{B^n_n},f(c)^{n+1},\dots,f(c)^{m}\right). 
\end{equation*}
Then $f(c)^{B^n_j}=c^j$ for all $j=1,\dots,n$ and all $c\in\mathcal{C}$.
\end{prop}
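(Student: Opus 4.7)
The plan is to unwind the definitions and apply Proposition~\ref{prop:gen}. The map $f$ is linear by construction, so it suffices to track what $f$ does to the coordinates indexed by the triangles $B^n_j$ of the building block $B^n$. Concretely, I would expand $c = \sum_{b \in B}\alpha_b b$ uniquely in the basis $B$ and write
\[
f(c)^{B^n_j} = \sum_{b\in B}\alpha_b\,\gen{\Delta^\mathcal{C}_{b}}^{B^n_j}.
\]

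Next I would identify each term $\gen{\Delta^\mathcal{C}_{b}}^{B^n_j}$. Since $B^n_j$ is a triangle of $B^n \subseteq \Delta^\mathcal{C}_{b}$, the defining formula $\gen{\Delta^\mathcal{C}_{b}}^t = u(b)^t$ on $T(\Delta^\mathcal{C}_{b})$ applies, so $\gen{\Delta^\mathcal{C}_{b}}^{B^n_j} = u(b)^{B^n_j}$. Proposition~\ref{prop:gen} then says exactly that $u(b)^{B^n_j} = b^j$.

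Substituting back gives
\[
f(c)^{B^n_j} = \sum_{b\in B}\alpha_b\, b^j = c^j,
\]
as required. There is no genuine obstacle here: the statement is really a sanity check that the linear map $f$, built from the generators $\gen{\Delta^\mathcal{C}_{b}}$, reproduces the original codeword $c$ on the coordinates $B^n_1,\dots,B^n_n$ that play the role of the image of $\mu$. The only thing to make sure of is that the sharing property of the $\Delta^\mathcal{C}_{b}$'s from Section~\ref{sec:constdeltaCB} — namely that they intersect precisely in $B^n$ — is what allows $\gen{\Delta^\mathcal{C}_{b}}$ to coincide with $u(b)$ on every triangle $B^n_j$ without interference from other basis vectors, so that the sum above really does collapse term-by-term to $c^j$.
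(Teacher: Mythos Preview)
Your proposal is correct and follows essentially the same argument as the paper: expand $f(c)$ via the definition $f(c)=\sum_{b\in B}\alpha_b\gen{\Delta^\mathcal{C}_b}$, invoke Proposition~\ref{prop:gen} together with the definition of $\gen{\Delta^\mathcal{C}_b}$ to get $\gen{\Delta^\mathcal{C}_b}^{B^n_j}=b^j$, and sum to obtain $c^j$. The paper's proof is in fact terser than yours, omitting the explicit mention of $u(b)$ and the remark about the shared $B^n$, but the substance is identical.
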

\begin{proof}
$$f(c)^{B^n_j}=\sum_{b\in B}\alpha_b \gen{\Delta^\mathcal{C}_{b}}^{B^n_j}.$$
By Proposition~\ref{prop:gen} and by definition of $\gen{\Delta^\mathcal{C}_{b}}$,
$$\sum_{b\in B}\alpha_b \gen{\Delta^\mathcal{C}_{b}}^{B^n_j}=\sum_{b\in B}\alpha_b b^{B^n_j}=c^j.$$

\end{proof}

\begin{cor}
\label{cor:inj}
The linear mapping $f$ is injective.\qed
\end{cor}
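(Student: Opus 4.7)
The plan is to reduce injectivity of the linear map $f$ to the triviality of its kernel, which follows immediately from Proposition~\ref{prop:mappingf}. Since $f$ is linear, it suffices to show that $f(c)=0$ forces $c=0$.

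Suppose $c=(c^1,\dots,c^n)\in\mathcal{C}$ satisfies $f(c)=0$. Then in particular each of the entries $f(c)^{B^n_j}$ vanishes for $j=1,\dots,n$. By Proposition~\ref{prop:mappingf}, $f(c)^{B^n_j}=c^j$, so $c^j=0$ for every $j$, hence $c=0$. Therefore $\ker f=\{0\}$ and $f$ is injective.

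There is essentially no obstacle here: all the work has been done by Proposition~\ref{prop:mappingf}, which guarantees that the restriction of $f(c)$ to the coordinates indexed by the triangles $B^n_1,\dots,B^n_n$ recovers $c$ itself. The injection $\mu$ from Definition~\ref{dfn:mappingf} (sending coordinate $i$ to $B^n_i$) is precisely the left inverse that makes this restriction work, so one could equivalently phrase the argument as saying that $f$ has the left inverse given by projection onto the coordinates in $\mu(R)$.
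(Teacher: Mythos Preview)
Your proof is correct and matches the paper's intent: the paper states the corollary with a bare \qed{} immediately after Proposition~\ref{prop:mappingf}, so the intended argument is exactly the one you give, namely that the projection onto the $B^n$-coordinates recovers $c$ and hence $\ker f=\{0\}$.
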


\begin{lem}
\label{lem:cycletriangles}
For each non-zero vector $w$ of $\ker\Delta^\mathcal{C}_B$ there exists $b\in B$ and $\gamma_b\neq 0$ such that $w^t=\gamma_b\gen{\Delta^\mathcal{C}_b}^t$ for all $t\in T(\Delta^\mathcal{C}_b- B^n)$.
\end{lem}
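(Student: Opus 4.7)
The plan is to fix a nonzero $w\in\ker\Delta^\mathcal{C}_B$ and, for each $b\in B$, construct a vector $\tilde w_b\in\ker\mathcal{M}(b)$ from the restriction of $w$ to $\Delta^\mathcal{C}_b$. Because $\dim\ker\mathcal{M}(b)=1$ by Proposition~\ref{prop:multispherespace}, $\tilde w_b$ is forced to be a scalar multiple $\gamma_b u$ of the generator $u$ used in the construction of $u(b)$. Once this is established, any $b$ with $\gamma_b\neq 0$ witnesses the lemma, and a short argument shows that at least one such $b$ must exist.

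To build $\tilde w_b$, fix $b$ and, for each $j\in J(b)$, consider the tunnel $T_{b,j}$ linking $g(j)$ to $B^n_j$. Its inner edges lie on no triangle outside $T_{b,j}$, because the triangular configurations $\Delta^\mathcal{C}_{b'}$ share only triangles of $B^n$ and because $T_{b,j}$ meets $\mathcal{M}(b)\cup B^n$ only along the edges of $g(j)$ and of $B^n_j$. Hence the $\Delta^\mathcal{C}_B$-kernel condition for $w$ at each inner edge of $T_{b,j}$ is exactly the hypothesis of Proposition~\ref{prop:tunnel}, so $w$ is constant on each of the two ends of $T_{b,j}$ and the two constants are negatives of each other. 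Let $\beta_{b,j}$ denote the common value on the $g(j)$-end, set $\tilde w_b^t:=w^t$ for $t\in T(\mathcal{M}(b))\setminus\{g(j):j\in J(b)\}$, and $\tilde w_b^{g(j)}:=\beta_{b,j}$. Checking $\tilde w_b\in\ker\mathcal{M}(b)$ is now routine: at edges not meeting any $g(j)$, the $\mathcal{M}(b)$-kernel condition coincides with the $\Delta^\mathcal{C}_B$-condition for $w$; at an edge of $g(j)$, the only triangle in $\Delta^\mathcal{C}_B$ outside $\mathcal{M}(b)$ meeting it is one end-triangle of $T_{b,j}$, whose $w$-value is precisely $\beta_{b,j}=\tilde w_b^{g(j)}$. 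Thus $\tilde w_b=\gamma_b u$ for some $\gamma_b\in\mathbb{F}$.

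By the proof of Proposition~\ref{prop:gen}, the construction of $u(b)$ is exactly this construction carried out with $\gamma_b=1$: $u(b)^t=u^t$ on non-removed $\mathcal{M}(b)$-triangles, $u(b)^t=u^{g(j)}=b^j$ on the $g(j)$-end of $T_{b,j}$, and $u(b)^t=-b^j$ on the $B^n_j$-end. Multiplying throughout by $\gamma_b$ and comparing with the definition of $\gen{\Delta^\mathcal{C}_b}$ yields $w^t=\gamma_b\gen{\Delta^\mathcal{C}_b}^t$ for every $t\in T(\Delta^\mathcal{C}_b-B^n)$. Finally, if every $\gamma_b$ were zero, then every $\beta_{b,j}$ would vanish and $w$ would be zero on every triangle of $\Delta^\mathcal{C}_B$ outside $B^n$; the kernel condition at any edge of $B^n_j$ would then force $w^{B^n_j}=0$, contradicting $w\neq 0$. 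Hence some $\gamma_b$ is nonzero, which is the stated conclusion. The main obstacle I expect is the bookkeeping around the tunnels: verifying that their inner edges are genuinely internal to $\Delta^\mathcal{C}_b$ so that Proposition~\ref{prop:tunnel} applies, and keeping the sign conventions on the two ends of each tunnel consistent between $w$ and $u(b)$.
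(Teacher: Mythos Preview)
Your proof is correct and follows essentially the same idea as the paper's: restrict $w$ to the piece associated with a basis vector, use one-dimensionality of the relevant kernel to get a scalar multiple of the generator, and argue that not all scalars can vanish. The only cosmetic difference is that you reconstruct a vector $\tilde w_b\in\ker\mathcal{M}(b)$ and invoke Proposition~\ref{prop:multispherespace} directly, whereas the paper restricts to $\Delta^\mathcal{C}_b$ and invokes Proposition~\ref{prop:gen} (that $\dim\ker\Delta^\mathcal{C}_b=1$); since Proposition~\ref{prop:gen} is itself proved via Proposition~\ref{prop:multispherespace}, the two routes coincide. Your treatment of the tunnel bookkeeping and of the ``some $\gamma_b\neq 0$'' step is in fact more explicit than the paper's, which compresses the latter into the remark that $\ker B^n$ is trivial.
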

\begin{proof}
The kernel $\ker B^n$ is $\emptyset$, since the triangles of $B^n$ are disjoint.
Therefore, every non-zero vector $w\in\ker\Delta^\mathcal{C}$ has a non-zero element $w^t\neq0$ indexed by a triangle $t\in T(\Delta^\mathcal{C}_{b}- B^n)$ for some $b\in B$.

Let $j$ be an index such that $b^j\neq0$. Let $t_1,t_2,t_3$ be three triangles of $\Delta^\mathcal{C}_b$ touching edges of $B^n_j$. Then by proposition~\ref{prop:tunnel}, $w^{t_1}=w^{t_2}=w^{t_3}$. By Proposition~\ref{prop:gen}, the dimension of $\Delta^\mathcal{C}_b$ is $1$. Hence, there is a non-zero scalar $\gamma_b$ such that $w^t=\gamma_b\gen{\Delta^\mathcal{C}_b}^t$ for all $t\in T(\Delta^\mathcal{C}_b- B^n)$.

\end{proof}

\begin{thm}
\label{thm:trianrep}
Let $\mathcal{C}$ be a linear code with a representable basis $B$ and let $\Delta^\mathcal{C}_B$ be the triangular configuration from this section.
Then $\mathcal{C}=\ker\Delta^\mathcal{C}_B/S$, where $S$ is a set of indices. Moreover, the linear mapping $f$ defined above is a bijection between the linear code $\mathcal{C}$ and $\ker\Delta^\mathcal{C}_B$ which maps minimal codewords to
minimal codewords.
\end{thm}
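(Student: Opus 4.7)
The plan is to assemble Propositions~\ref{prop:mappingf} and \ref{prop:gen}, Corollary~\ref{cor:genentries}, and Lemma~\ref{lem:cycletriangles} into one argument. I take $S:=T(\Delta^\mathcal{C}_B)\setminus\{B^n_1,\ldots,B^n_n\}$. Under the identification supplied by $\mu$, the coordinates of $\ker\Delta^\mathcal{C}_B$ that survive puncturing are exactly the $n$ coordinates of $\mathcal{C}$, and Proposition~\ref{prop:mappingf} says that puncturing $f(c)$ along $S$ returns $c$. Therefore, once $f$ is shown to be a bijection $\mathcal{C}\to\ker\Delta^\mathcal{C}_B$, the identity $\mathcal{C}=\ker\Delta^\mathcal{C}_B/S$ is immediate.

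Injectivity of $f$ is Corollary~\ref{cor:inj}. For surjectivity I would use a peeling argument. Given $w\in\ker\Delta^\mathcal{C}_B$, Lemma~\ref{lem:cycletriangles} produces some $b\in B$ and $\gamma_b\ne 0$ with $w^t=\gamma_b\gen{\Delta^\mathcal{C}_b}^t$ for every $t\in T(\Delta^\mathcal{C}_b-B^n)$; replace $w$ by $w-\gamma_b\gen{\Delta^\mathcal{C}_b}$ and iterate. Because distinct $\Delta^\mathcal{C}_{b}$ share only $B^n$, the sets $T(\Delta^\mathcal{C}_b-B^n)$ are pairwise disjoint, and by Corollary~\ref{cor:genentries} each $\gen{\Delta^\mathcal{C}_b}$ is nowhere zero on its own such set. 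An index $b$ already used in an earlier step therefore cannot be chosen again, since a repeat would force $\gamma_b=0$ from the zero already present on $T(\Delta^\mathcal{C}_b-B^n)$. The procedure thus terminates in at most $|B|$ steps, yielding scalars $\{\gamma_b\}_{b\in B}$ for which $w':=w-\sum_{b\in B}\gamma_b\gen{\Delta^\mathcal{C}_b}$ vanishes on every triangle outside $B^n$. A last application of Lemma~\ref{lem:cycletriangles} to $w'$ gives a contradiction unless $w'=0$, so $w=\sum_{b}\gamma_b\gen{\Delta^\mathcal{C}_b}=f(c)$ for $c:=\sum_b\gamma_b b\in\mathcal{C}$, proving surjectivity and hence $\mathcal{C}=\ker\Delta^\mathcal{C}_B/S$.

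For the minimality claim, let $d\in\mathcal{C}$ be minimal and take any $w\preceq f(d)$ in $\ker\Delta^\mathcal{C}_B$. Use the bijection to write $w=f(c)$ with $c\in\mathcal{C}$. By Proposition~\ref{prop:mappingf} the restrictions of $f(c)$ and $f(d)$ to the coordinates $B^n_1,\ldots,B^n_n$ are $c$ and $d$ respectively, so $w\preceq f(d)$ read off the $B^n$-coordinates is exactly the relation $c\preceq d$. Minimality of $d$ forces $c=d$, whence $w=f(d)$, so $f(d)$ is minimal in $\ker\Delta^\mathcal{C}_B$.

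The main obstacle is the surjectivity step: one has to check both that the peeling terminates and that the final residue is forced to vanish, and both rely on the delicate fact (Corollary~\ref{cor:genentries}) that each $\gen{\Delta^\mathcal{C}_b}$ is nonzero on its private set $T(\Delta^\mathcal{C}_b-B^n)$, together with the observation that these private sets are pairwise disjoint because the configurations $\Delta^\mathcal{C}_b$ meet only in the shared base triangles. Everything else is a routine application of the already-established propositions.
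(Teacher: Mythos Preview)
Your proof is correct and follows essentially the same route as the paper: the same choice of $S$, injectivity via Corollary~\ref{cor:inj}, surjectivity by writing an arbitrary $w\in\ker\Delta^\mathcal{C}_B$ as $\sum_b\gamma_b\gen{\Delta^\mathcal{C}_b}$ using Lemma~\ref{lem:cycletriangles} together with $\ker B^n=0$, and minimality by restricting $\preceq$ to the $B^n$-coordinates via Proposition~\ref{prop:mappingf}. The only cosmetic difference is that you peel off the $\gamma_b$'s one at a time whereas the paper collects them simultaneously via the set $B(w)=\{b:\exists\,t\in T(\Delta^\mathcal{C}_b-B^n),\ w^t\neq 0\}$; both arguments rest on the same two facts, namely that the private triangle sets $T(\Delta^\mathcal{C}_b-B^n)$ are pairwise disjoint and that each $\gen{\Delta^\mathcal{C}_b}$ is nowhere zero on its own private set.
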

\begin{proof}
By Proposition~\ref{prop:mappingf}, the code $\mathcal{C}$ equals $\ker\Delta^\mathcal{C}_B/S$, where $S$ is the set of triangles of $\Delta^\mathcal{C}_B- B^n$.

By Corollary~\ref{cor:inj}, the mapping $f$ is injective.
It remains to be proven that $\dim\mathcal{C}=\dim\ker\Delta^\mathcal{C}_B$.
We show that every codeword $w$ of $\ker\Delta^\mathcal{C}_B$ is in the linear span of $\left\{f(b)\vert b\in B\right\}$.
Let $B(w)$ be the following set of basis vectors $\{b\in B\vert\exists t\in T(\Delta^\mathcal{C}_{b}- B^n) \text{ such that } w^t\neq 0\}$. By Lemma~\ref{lem:cycletriangles}, the set $B(w)$ is not empty. By Lemma~\ref{lem:cycletriangles}, vector $w-\sum_{b\in B(w)}\gamma_b\gen{\Delta^\mathcal{C}_b}$ is non-zero only on coordinates indexed by triangles of $B^n$. Since $\ker B^n=\emptyset$, vector $w-\sum_{b\in B(w)}\gamma_b\gen{\Delta^\mathcal{C}_b}$ is $0$. Hence, the vector $w$ is in the span of $\left\{f(b_1),\dots,f(b_d)\right\}$.



Finally, we show that $f$ maps minimal codewords to minimal codewords.
Recall a partial order on $\mathcal{C}$.
Let $r=(r^1,\dots,r^n),s=(s^1,\dots,s^n)$ be codewords of $\mathcal{C}$. Then $r\preceq s$ if $r^i\neq 0$ implies $s^i\neq 0$ for all $i=1,\dots,n$. A codeword $s$ is \dfn{minimal} if $r\preceq s$ implies $r=s$ for all $r$.
Let $s$ be a minimal codeword.
Suppose on the contrary that $f(s)$ is not a minimal codeword of $\ker\Delta^\mathcal{C}_B$.
Then $f(r)\prec f(s)$ for some codeword $r$. Therefore, for all $j=1,\dots,n$; $f(r)^{B^n_j}\neq 0$ implies that $f(s)^{B^n_j}\neq 0$. By Proposition~\ref{prop:mappingf}, $r^j=f(r)^{B^n_j}$ and $s^j=f(s)^{B^n_j}$, for all $j=1,\dots,n$. Hence, $r^j\neq 0$ implies that $s^j\neq 0$, for all $j=1,\dots,n$. Thus, $r\prec s$. This is a contradiction.
\end{proof}

\subsection{Balanced triangular configuration $\Delta^\mathcal{C}_B$}
\label{sec:balrepresentation}

A triangular configuration $\Delta^\mathcal{C}_B$ is \dfn{balanced} if there is an integer $e$ such that $\left\lvert T\left(\Delta^\mathcal{C}_{b}\right)\right\rvert-w(b)=e$ for all $b\in B$.
This $e$ is denoted by $\ex{\Delta^\mathcal{C}_B}$. A code $\mathcal{C}$ is \dfn{even} if all codewords have an even weight. The following proposition is straightforward.
\begin{prop}
\label{prop:expcont}
$\ex{\Delta^\mathcal{C}_B}=\left\lvert T\left(\Delta^\mathcal{C}_B- B^n\right)\right\rvert/\dim\mathcal{C}$\qed
\end{prop}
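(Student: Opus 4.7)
The plan is an elementary counting argument built on the additive structure of $\Delta^\mathcal{C}_B$. The first observation, recorded at the opening of Section~\ref{sec:constdeltaCB}, is that the pieces $\Delta^\mathcal{C}_b$, $b\in B$, pairwise share only triangles of $B^n$. Consequently the sets $T(\Delta^\mathcal{C}_b - B^n)$, $b \in B$, are pairwise disjoint and their union equals $T(\Delta^\mathcal{C}_B - B^n)$, so
\begin{equation*}
\lvert T(\Delta^\mathcal{C}_B - B^n) \rvert = \sum_{b\in B} \lvert T(\Delta^\mathcal{C}_b - B^n) \rvert.
\end{equation*}

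Next, I would read off from the construction in Section~\ref{sec:representation} that the $B^n$-triangles appearing in the piece $\Delta^\mathcal{C}_b$ are precisely the $w(b)$ triangles $B^n_j$ with $b^j \neq 0$ --- these are the targets of the tunnels used when linking $\mathcal{M}(b)$ to $B^n$, whereas the $B^n_j$ with $b^j = 0$ contribute nothing to the piece $\Delta^\mathcal{C}_b$. This gives
\begin{equation*}
\lvert T(\Delta^\mathcal{C}_b - B^n) \rvert = \lvert T(\Delta^\mathcal{C}_b) \rvert - w(b),
\end{equation*}
and the balance assumption identifies the right-hand side as the constant $\ex{\Delta^\mathcal{C}_B}$, independently of $b$.

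Summing this identity over the $d = \dim \mathcal{C}$ basis vectors $b \in B$ and dividing by $d$ produces the desired equality $\ex{\Delta^\mathcal{C}_B} = \lvert T(\Delta^\mathcal{C}_B - B^n) \rvert / \dim \mathcal{C}$. There is no genuine obstacle; the proof collapses to one line once the disjointness on non-$B^n$ triangles and the balance condition are in hand. The only point demanding any care is the middle step --- correctly attributing to each $\Delta^\mathcal{C}_b$ only those $B^n_j$ that the construction actually links into $\mathcal{M}(b)$ --- and that bookkeeping is exactly what the form of the balance condition enforces.
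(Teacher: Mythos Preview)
Your argument is correct and is precisely the straightforward counting the paper has in mind; the paper states the proposition with a bare \qed{} and gives no separate proof. Your closing remark about the bookkeeping --- that only the $w(b)$ triangles $B^n_j$ with $b^j\neq 0$ should be counted in $T(\Delta^\mathcal{C}_b)$, as the very form of the balance condition $\lvert T(\Delta^\mathcal{C}_b)\rvert - w(b)=e$ presupposes --- is exactly the one subtlety, and you have handled it appropriately.
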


\begin{prop}
\label{prop:balancedrepr}
Let $C$ be an even linear code with a representable basis $B=\{b_1,\dots,b_d\}$. Let $n$ be an integer. Then there exists a balanced triangular configuration $\Delta^\mathcal{C}_B$ such that $n<\ex{\Delta^\mathcal{C}_B}$.
\end{prop}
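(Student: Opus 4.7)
The plan is to exploit the freedom provided by Proposition~\ref{prop:multispherespace} to tune the size of each $\mathcal{M}(b)$, and hence of each $\Delta^\mathcal{C}_b$, so that the quantities $|T(\Delta^\mathcal{C}_b)|-w(b)$ all coincide at a common value $e$ larger than $n$.

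First I compute $|T(\Delta^\mathcal{C}_b)| - w(b)$ as an explicit function of $|T(\mathcal{M}(b))|$ and $w(b)$. The construction in Section~\ref{sec:representation} starts from $B^n\cup\mathcal{M}(b)$, adds one tunnel of fixed triangle count $\tau$ for each nonzero entry of $b$, and then removes the $w(b)$ ``target'' triangles $g(j)$ of $\mathcal{M}(b)$. Collecting terms one sees that the dependence on $|T(\mathcal{M}(b))|$ is linear with a coefficient that is the same (namely $1$) for every $b$; the only $b$-dependent ``slack'' comes through $w(b)$ and the fixed per-tunnel contribution.

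Next, I use Proposition~\ref{prop:multispherespace} as a parameterized construction. Its multiplicity parameter (also called $n$ in that statement) can be chosen arbitrarily large, and increasing it by one contributes a fixed number $\sigma$ of additional triangles to $\mathcal{M}(b)$. Because the construction is generic in $b$---it depends only on the set of distinct entries of $b$ and on standard building blocks---this step $\sigma$ can be arranged to be the same positive integer for every $b\in B$ (if needed, by replacing each individual step by a common multiple, or by attaching trivial ``padding'' substructures to $\mathcal{M}(b)$ that do not alter $\ker \mathcal{M}(b)$). Hence, for each $b$, the achievable values of $|T(\Delta^\mathcal{C}_b)|-w(b)$ form an arithmetic progression with common difference $\sigma$.

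Finally, I align the residues and pick a large common value. Since $\mathcal{C}$ is even, every codeword, and in particular every basis vector $b$, has even weight, so the per-$b$ offset $w(b)(\tau-2)$ has a fixed residue modulo $\sigma$. This makes the initial residues of $|T(\Delta^\mathcal{C}_b)|-w(b)$ modulo $\sigma$ agree across $b\in B$. Choosing any integer $e$ that exceeds both $n$ and $\max_b |T(\Delta^\mathcal{C}_b)|-w(b)$ (for the initial parameter values) and is congruent to this common residue modulo $\sigma$, I select, for each $b\in B$, a multiplicity parameter $m_b$ so that Proposition~\ref{prop:multispherespace} delivers an $\mathcal{M}(b)$ realizing $|T(\Delta^\mathcal{C}_b)|-w(b)=e$. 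Taking $\Delta^\mathcal{C}_B$ to be the union of the resulting $\Delta^\mathcal{C}_b$'s yields a balanced triangular configuration with $\ex{\Delta^\mathcal{C}_B}=e>n$.

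The main obstacle is the bookkeeping in step two and three: ensuring that $\sigma$ really can be chosen uniformly in $b$ and that the evenness of $\mathcal{C}$ is precisely what is needed to align the residues, rather than merely sufficient. If a direct inspection of the construction of $\mathcal{M}(b)$ does not yield this uniformity, one falls back on the padding trick: every $\mathcal{M}(b)$ can be enlarged by grafting in a ``neutral'' triangular piece (for instance, linking a pair of triangles by a tunnel within $\mathcal{M}(b)$ in a way that preserves $\dim\ker\mathcal{M}(b)=1$ and the generator $u(b)$ on the surviving coordinates), which provides arbitrary fine-grained adjustments to $|T(\mathcal{M}(b))|$ and forces the residue classes to coincide.
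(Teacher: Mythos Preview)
Your primary route---tuning the multiplicity parameter in Proposition~\ref{prop:multispherespace}---does not work as written. The multisphere $\mathcal{M}(b)$ is assembled from $k$ component spheres, where $k$ is the number of \emph{distinct} nonzero entries of $b$; raising the parameter enlarges each of these spheres, so the per-step increment in $|T(\mathcal{M}(b))|$ is proportional to $k$ and hence not the same for different $b\in B$. Passing to a common multiple does not save the residue argument either: the starting value $|T(\mathcal{M}(b))|$ already depends on $b$ through the integers $n_1,\dots,n_k$ that govern how many tunnels meet each junction, so evenness of $w(b)$ alone tells you nothing about the residue of $|T(\Delta^\mathcal{C}_b)|-w(b)$ modulo your $\sigma$. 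What you wrote is a strategy with its key arithmetic step explicitly left open, not a proof.

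Your fallback ``padding trick'' is exactly what the paper does, but the paper carries it out concretely and \emph{after} the construction rather than inside $\mathcal{M}(b)$. It fixes the initial $\Delta^\mathcal{C}_B$, sets $k_i=|T(\Delta^\mathcal{C}_{b_i})|-w(b_i)$, notes that every $k_i$ is even because $\mathcal{C}$ is even, and introduces two local subdivision moves on $\Delta^\mathcal{C}_{b_i}-B^n$: one replacing a single triangle by seven ($+6$), the other replacing a pair sharing a degree-$2$ edge by six ($+4$). Since $\gcd(4,6)=2$, suitable combinations of these moves raise any $k_i$ to any prescribed even target; one equalizes the $k_i$ one index at a time and then, if the common value is still at most $n$, applies the $+6$ move uniformly across all $i$ until it exceeds $n$. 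This replaces your unverified uniformity and residue claims by an explicit two-move calculus whose only arithmetic input is the parity of the $k_i$.
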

\begin{proof}
Let $\Delta^\mathcal{C}_B$ be the triangular configuration from Section~\ref{sec:representation}.
Let $k_i=\left\lvert T\left(\Delta^\mathcal{C}_{b_i}\right)\right\rvert-w(b_i)$ for $i=1,\dots,d$.
We suppose that $k_1\geq k_2\geq\dots\geq k_d$, if not, we rename the indices.
Every $k_i$ is even, since $\mathcal{C}$ is even.

We expand the parts of $\Delta^\mathcal{C}_B$ by the following algorithm.
First, we define two steps. Let $\Delta$ be a triangular configuration.
Step A: We choose a triangle of $\Delta$ and subdivide it in the way depicted in Figure~\ref{fig:subdivision}. This step increases the number of triangles of $\Delta$ by $6$.
Step B: We choose two triangles of $\Delta$ connected by an edge of degree $2$ and subdivide them in the way depicted in Figure~\ref{fig:subdivision2}. This step increases the number of triangles of $\Delta$ by $4$.

We initialize the set $I$ to $\{1\}$, then we apply the following procedure while the set $I\neq\{1,\dots,d\}$.
 
Let $i$ be the smallest index not in $I$. We repeatedly apply step A to $\Delta^\mathcal{C}_{b_i}- B^n$ until $k_i\leq k_{i-1}-6$.
Then,
if $k_i=k_{i-1}-4$, we apply step B to $\Delta^\mathcal{C}_{b_i}- B^n$.
If $k_i=k_{i-1}-2$, we apply step B to $\Delta^\mathcal{C}_{b_j}- B^n$ for every $j\in I$, and step A to $\Delta^\mathcal{C}_{b_i}- B^n$. Then, we put the index $i$ to $I$ and repeat these steps.

Note that, we can apply step B to $\Delta^\mathcal{C}_{b_j}- B^n$ only if it contains two triangles connected by an edge of degree $2$. If triangular configuration $\Delta^\mathcal{C}_{b_j}- B^n$ does not contain an edge of degree $2$, we apply step A to $\Delta^\mathcal{C}_{b_i}- B^n$ for every $i=1,\dots,d$. Then, $\Delta^\mathcal{C}_{b_j}- B^n$ contains two triangles connected by an edge of degree $2$.

After this procedure, we have a balanced triangular configuration $\Delta^\mathcal{C}_B$. If $\ex{\Delta^\mathcal{C}_B}<n$, we repeatedly apply step A on $\Delta^\mathcal{C}_{b_i}- B^n$ for every $i=1,\dots,d$ unless $\ex{\Delta^\mathcal{C}_B}>n$.

\end{proof}

Let $c$ be a codeword of $\mathcal{C}$ and let $c=\sum_{b\in B}\alpha_bb$ be the unique expression of $c$, where $\alpha_b\in\mathbb{F}$. The \dfn{degree} of $c$ with respect to a basis $B$ is defined to be the number of non-zero $\alpha_b$'s.
The degree is denoted by $\vd{c}$.

\begin{prop}
\label{prop:weightf}
Let $\mathcal{C}$ be an even finite linear code over a field $\mathbb{F}$ with a representable basis $B$ and let $\Delta^\mathcal{C}_B$ be a balanced triangular configuration provided by Proposition~\ref{prop:balancedrepr} and let $f:\mathcal{C}\mapsto\ker\Delta^\mathcal{C}_B$ be the linear mapping from Definition~\ref{dfn:mappingf}. Then $w(f(c))=w(c)+\vd{c}\ex{\Delta^\mathcal{C}_B}$ for every codeword $c\in \mathcal{C}$.
\end{prop}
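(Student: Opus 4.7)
The plan is to compute $w(f(c))$ by splitting the index set $T(\Delta^\mathcal{C}_B)$ into the $n$ triangles of $B^n$ and the triangles lying outside $B^n$, then counting the non-zero coordinates of $f(c)$ on each part separately. The two contributions will turn out to be $w(c)$ and $\vd{c}\cdot\ex{\Delta^\mathcal{C}_B}$, respectively.

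On the $B^n$ part, Proposition~\ref{prop:mappingf} gives $f(c)^{B^n_j}=c^j$ for every $j$, so this part contributes exactly $w(c)$ to $w(f(c))$. On the remaining part, the crucial structural fact, noted at the start of Section~\ref{sec:constdeltaCB}, is that the configurations $\Delta^\mathcal{C}_b$ pairwise share only triangles of $B^n$. Hence every $t\in T(\Delta^\mathcal{C}_B- B^n)$ lies in exactly one $\Delta^\mathcal{C}_b$, and by the defining support of $\gen{\Delta^\mathcal{C}_{b'}}$ the vector $\gen{\Delta^\mathcal{C}_{b'}}$ vanishes at $t$ for every $b'\neq b$. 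Expanding $c=\sum_b\alpha_b b$ then yields $f(c)^t=\alpha_b\gen{\Delta^\mathcal{C}_b}^t$, and Corollary~\ref{cor:genentries} ensures $\gen{\Delta^\mathcal{C}_b}^t\neq 0$, so $f(c)^t\neq 0$ if and only if $\alpha_b\neq 0$. Summing over the triangles outside $B^n$ gives the contribution $\sum_{b:\alpha_b\neq 0}\lvert T(\Delta^\mathcal{C}_b- B^n)\rvert$.

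To collapse this sum into $\vd{c}\cdot\ex{\Delta^\mathcal{C}_B}$ I would invoke the balanced hypothesis, which (combined with Proposition~\ref{prop:expcont}) forces $\lvert T(\Delta^\mathcal{C}_b- B^n)\rvert=\ex{\Delta^\mathcal{C}_B}$ for every $b\in B$. Adding the two contributions then yields $w(f(c))=w(c)+\vd{c}\cdot\ex{\Delta^\mathcal{C}_B}$, as desired. The proof is essentially a bookkeeping on the support of $f(c)$; the only real content is the uniform non-vanishing given by Corollary~\ref{cor:genentries} together with the uniform per-$b$ triangle count supplied by the balanced property. The evenness hypothesis on $\mathcal{C}$ is not used inside this counting argument at all; it enters only upstream, through Proposition~\ref{prop:balancedrepr}, in order to guarantee that such a balanced $\Delta^\mathcal{C}_B$ exists, so there is no serious obstacle here.
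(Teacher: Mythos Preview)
Your proof is correct and follows essentially the same approach as the paper's: split $T(\Delta^\mathcal{C}_B)$ into $B^n$ and its complement, use Proposition~\ref{prop:mappingf} on the first part and Corollary~\ref{cor:genentries} together with the balanced hypothesis on the second. If anything, you are slightly more explicit than the paper in spelling out why $f(c)^t=\alpha_b\gen{\Delta^\mathcal{C}_b}^t$ on each triangle $t$ outside $B^n$ and hence why the contribution vanishes for those $b$ with $\alpha_b=0$.
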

\begin{proof}
Write $c$ as $\sum_{b\in B}\alpha_bb$.
Then, $f(c)=\sum_{b\in B}\alpha_b\gen{\Delta^\mathcal{C}_{b}}$.
Let $I$ be the set of basis vectors $b$ such that $\alpha_b\neq 0$.
By Corollary~\ref{cor:genentries}, vector $f(c)$ has non-zero elements indexed by triangles of $\Delta^\mathcal{C}_{b}- B^n$ for all $b\in I$.
The number of these triangles is $\vd{c}\ex{\Delta^\mathcal{C}_B}$, since $\left\lvert T\left(\Delta^\mathcal{C}_{b}- B^n\right)\right\rvert=\ex{\Delta^\mathcal{C}_B}$ for all $b\in I$ and $\left\lvert I\right\rvert=\vd{c}$.
By Proposition~\ref{prop:mappingf}, $f(c)^{B^n_j}=c^j$ for $j=1,\dots,n$. Hence, the number of non-zero coordinates indexed by triangles of $B^n$ is $w(c)$.
Therefore, $w(f(c))=w(c)+\vd{c}\ex{\Delta^\mathcal{C}_B}$.
\end{proof}

\subsection{Proof of Proposition~\ref{prop:multispherespace}}
\label{sec:proofprop}
\subsubsection{Oriented triangular sphere $\vec{\mathcal{S}^m}$}
\label{sec:triansphere}
\begin{figure}[h]
\begin{center}
	\includegraphics{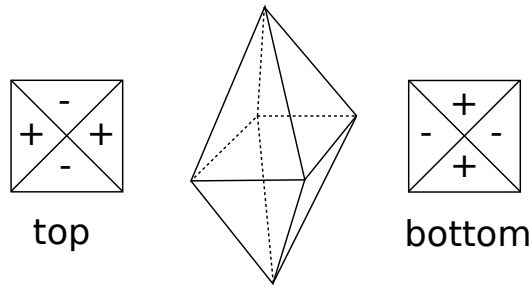}
\end{center}
\caption{Oriented triangular sphere $\vec{\mathcal{S}^8}$.}
\label{fig:sphere}
\end{figure}
In this section we give a proof of Proposition~\ref{prop:multispherespace}.
The oriented triangular sphere $\vec{\mathcal{S}^m}$ is a triangulation of a $2$-dimensional sphere by $m$ triangles, such that there is an assignment of sign '$+$' and '$-$' to triangles and every edge is incident with one '$+$' triangle and one '$-$' triangle.
An example is depicted in Figure~\ref{fig:sphere} for $m=8$.

\begin{prop}
Let $\mathbb{F}$ be a field and let $a$ be a non-zero element of $\mathbb{F}$. Then $\ker \vec{\mathcal{S}^m}=\linspan{\{(a,-a,a,-a,\dots,a,-a)\}}$. \qed
\end{prop}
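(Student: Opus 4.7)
The plan is to exploit two geometric facts about $\vec{\mathcal{S}^m}$: every edge lies in exactly two triangles (because $\vec{\mathcal{S}^m}$ triangulates the $2$-sphere, so every edge is internal), and the dual graph of the triangulation (vertices = triangles, adjacencies via shared edges) is connected. Combined with the sign condition---every edge meets one triangle from $[\vec{\mathcal{S}^m}]_+$ and one from $[\vec{\mathcal{S}^m}]_-$---these will force any kernel vector to take a single value $a$ on all positive triangles and $-a$ on all negative ones.

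First I would verify containment: the vector $w = (a,-a,a,-a,\dots,a,-a)$, with $w^t = a$ for $t \in [\vec{\mathcal{S}^m}]_+$ and $w^t = -a$ for $t \in [\vec{\mathcal{S}^m}]_-$, lies in $\ker \vec{\mathcal{S}^m}$. Indeed, for each edge $e$, the incidence matrix equation reads $\sum_{e \in t} w^t = 0$; since $e$ lies in exactly one positive and one negative triangle, this sum is $a + (-a) = 0$. Hence $\linspan{\{w\}} \subseteq \ker \vec{\mathcal{S}^m}$.

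For the reverse inclusion, let $v \in \ker \vec{\mathcal{S}^m}$ be arbitrary. For every edge $e$, let $t_e^+$ and $t_e^-$ denote the unique positive and negative triangles containing $e$; the defining equation $v^{t_e^+} + v^{t_e^-} = 0$ gives $v^{t_e^-} = -v^{t_e^+}$. Fix any positive triangle $t_0$ and set $a := v^{t_0}$. I will propagate this value along paths in the dual graph: given any triangle $t$, connectivity of the dual graph of $\vec{\mathcal{S}^m}$ yields a sequence of triangles $t_0, t_1, \dots, t_k = t$ in which consecutive triangles share an edge; by the sign condition, consecutive triangles have opposite signs, so a straightforward induction on $k$ shows $v^{t_i} = a$ if $t_i \in [\vec{\mathcal{S}^m}]_+$ and $v^{t_i} = -a$ if $t_i \in [\vec{\mathcal{S}^m}]_-$. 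Therefore $v = w$, so $v \in \linspan{\{w\}}$.

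The only step requiring any care is justifying that the dual graph is connected and that the sign condition forces signs to alternate along any dual path; both are immediate from the hypothesis that $\vec{\mathcal{S}^m}$ triangulates $S^2$ with the prescribed two-coloring of triangles, so no real obstacle arises. Combining the two inclusions gives $\ker \vec{\mathcal{S}^m} = \linspan{\{(a,-a,a,-a,\dots,a,-a)\}}$, completing the proof. \qed
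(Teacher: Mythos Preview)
Your argument is correct. The paper itself omits the proof entirely (the proposition is stated with a bare \qed), so there is nothing to compare against; your propagation argument via the dual graph, using that every edge of a triangulated $2$-sphere lies in exactly two triangles and that the two-coloring forces alternating signs along any dual path, is exactly the natural justification. The only cosmetic point is that you reuse the symbol $a$ for $v^{t_0}$, momentarily shadowing the $a$ from the statement; since $\linspan{\{(a,-a,\dots,a,-a)\}}$ is the same one-dimensional subspace for every nonzero $a\in\mathbb{F}$, this does no harm, but writing $v^{t_0}=\lambda a$ for some $\lambda\in\mathbb{F}$ and concluding $v=\lambda w$ would be slightly cleaner.
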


\begin{prop}
Let $k,l$ be non-negative integers. Then there exists the oriented triangular sphere $\vec{\mathcal{S}^m}$ with $m=8+6l+4k$ triangles.
\end{prop}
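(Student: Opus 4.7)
The plan is to prove this by induction on $l+k$, with base case $l=k=0$ handled by the oriented triangular sphere $\vec{\mathcal{S}^8}$ shown in Figure~\ref{fig:sphere} (the octahedron with its natural $\pm$-coloring). For the inductive step, I will describe two local modifications, matching Steps A and B of the previous subsection, that each take an existing $\vec{\mathcal{S}^m}$ to $\vec{\mathcal{S}^{m+6}}$ and $\vec{\mathcal{S}^{m+4}}$ respectively, while preserving the defining property that every edge lies between one $+$ triangle and one $-$ triangle. Once these two operations are established, I iterate Step A $l$ times and Step B $k$ times starting from $\vec{\mathcal{S}^8}$ to obtain $\vec{\mathcal{S}^{8+6l+4k}}$.

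For Step A, I pick any triangle $t$ of $\vec{\mathcal{S}^m}$, with sign $s \in \{+,-\}$, and replace it by seven triangles via two nested $1$-to-$4$ subdivisions: subdivide $t$ into four triangles by joining the midpoints of its edges, then further subdivide the central triangle the same way. Assigning sign $s$ to the three outer corner triangles, sign $-s$ to the three next triangles, and sign $s$ to the innermost gives an internally alternating pattern in which every boundary triangle of the patch has the original sign $s$; hence each new internal edge lies between opposite signs, and each of the three boundary edges still separates a sign-$s$ triangle from the sign-$(-s)$ triangle of $\vec{\mathcal{S}^m}$ on the other side. For Step B, I pick any edge $e$ of $\vec{\mathcal{S}^m}$ (any such edge is incident to exactly one $+$ and one $-$ triangle, by definition) and insert two new vertices on $e$, subdividing each of the two incident triangles into three. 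The middle of the three replacements of the $+$ triangle is assigned $-$, the middle of the three replacements of the $-$ triangle is assigned $+$, and the four outer pieces keep the signs of their parents; a direct check of the new internal edges and of the boundary edges confirms the alternating condition is preserved.

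Finally, the induction proceeds as follows: to realize $m=8+6l+4k$, I start from $\vec{\mathcal{S}^8}$, apply Step A $l$ times (each application is always possible because there is always at least one triangle to subdivide), and then apply Step B $k$ times. I do not expect any genuine obstacle: the only thing to be careful about is that the sign assignments on the new patches do not conflict at the boundary, which is exactly what the explicit colorings above guarantee, and that Step B's prerequisite—two triangles sharing an edge of degree $2$—is automatic in a triangulation of the sphere. A small bookkeeping remark at the end notes that the order of application does not matter, so every pair $(k,l)$ is realizable.
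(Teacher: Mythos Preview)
Your overall plan—start from the octahedron $\vec{\mathcal{S}^8}$ and repeatedly apply local moves adding $6$ or $4$ triangles—is exactly the paper's argument, and your Step~B is correct.

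Step~A, however, does not work as written. Joining the midpoints of the edges of $t$ places three new vertices \emph{on the boundary} of $t$. The three neighbouring triangles of the sphere are not being modified, so after the replacement they would meet the new patch along edges that have been subdivided on one side only; the result is no longer a simplicial complex, hence not a triangulation of $S^2$, and your remark that ``each of the three boundary edges still separates a sign-$s$ triangle from the sign-$(-s)$ triangle on the other side'' cannot hold because there are now six boundary edges, not three. (The same defect reappears internally: the second $1$-to-$4$ subdivision adds midpoints to the edges that the central piece shares with your three outer corner triangles, so those corners also cease to be triangles, and you do not actually end up with seven faces.) The paper's $+6$ move in Figure~\ref{fig:subdivision} avoids this by inserting three \emph{interior} vertices and leaving the three boundary edges of $t$ intact; the resulting seven triangles still admit the concentric $s,-s,s$ sign pattern you describe, and the patch glues back into the oriented sphere without conflict.
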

\begin{proof}
We construct the desired sphere $\vec{\mathcal{S}^m}$ from the sphere $\vec{\mathcal{S}^8}$, depicted in Figure~\ref{fig:sphere}, by sequentially subdividing triangles of $\vec{\mathcal{S}^8}$ in the way depicted in Figure~\ref{fig:subdivision} or \ref{fig:subdivision2}. These subdivisions increase the number of triangles by $4$ or by $6$.

\begin{figure}
 \centering
 \includegraphics{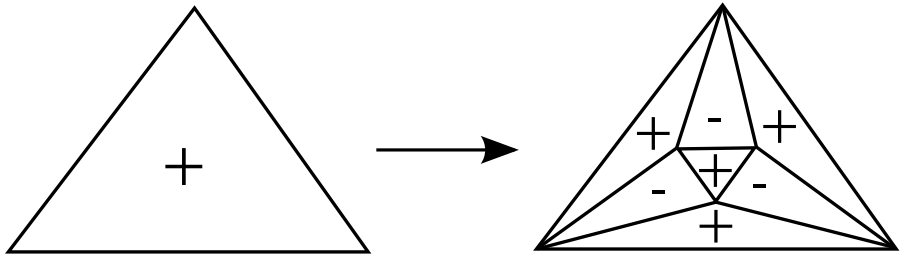}
 \caption{Triangle subdivision}
 \label{fig:subdivision}
\end{figure}
\begin{figure}
 \centering
 \includegraphics{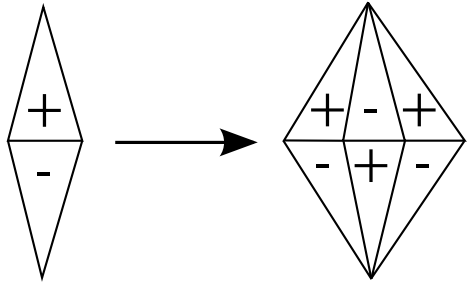}
 \caption{Triangles subdivision}
 \label{fig:subdivision2}
\end{figure}

\end{proof}

\subsubsection{Oriented triangular multisphere $\vec{\mathcal{M}}^M_{n_1,n_2,\dots,n_k}$}
In this subsection we construct the oriented triangular configuration which we call oriented triangular multisphere. We note that an important property of the oriented triangular multisphere which we use is that it has an even number of triangles.
In the construction of oriented triangular multisphere we proceed in four steps.

Step 1. Let $n_1,n_2,\dots,n_k$ be distinct positive integers and let $M$ be an integer.
We start with oriented triangular configuration
$$\vec{\mathcal{M}}_1:=\vec{\mathcal{S}_{1}}\cup t_{12}\cup t'_{12}\cup\vec{\mathcal{S}_{2}}\cup t_{23}\cup t'_{23}\cup\dots\cup t_{(k-1)k}\cup t'_{(k-1)k}\cup\vec{\mathcal{S}_{k}}$$
where
$t_{i(i+1)}$, $t'_{i(i+1)}$ are empty triangles, $i=1,\dots,k-1$;
and $\vec{\mathcal{S}_{j}}$ is oriented triangular sphere $\vec{\mathcal{S}^{m}}$ (see Section~\ref{sec:triansphere}) such that $m>4n_i$ and $m>2M$ for every $i=1,\dots,k$ and every $j=1,\dots,k$. If $k$ equals $1$, the triangular multisphere $\vec{\mathcal{M}}_{n_1}^M$ is $\vec{\mathcal{S}_{1}}$. Recall that an empty triangle is a set of three edges forming a boundary of a triangle.

Step 2. We make the following links between the triangles of $\vec{\mathcal{M}}_1$. For every $i=1,\dots,k-1$;
we choose $n_{i+1}$ different triangles of $\left[\vec{\mathcal{S}_{i}}\right]_-$ (for this notation see Section~\ref{sec:orientation}) and create the link by the tunnel from empty triangle $t_{i(i+1)}$ to each chosen triangle.
Then, we choose $n_i$ different triangles of $\left[\vec{\mathcal{S}}_{(i+1)}\right]_+$ and create the link by the tunnel from each chosen triangle to empty triangle $t_{i(i+1)}$.
Then, we delete the triangles of $\left[\vec{\mathcal{S}_{i}}\right]_-$ and $\left[\vec{\mathcal{S}}_{(i+1)}\right]_+$ that we linked with a tunnel from $\vec{\mathcal{M}}_1$.
We denote the resulting triangular configuration by $\vec{\mathcal{M}}_2$.

Step 3. To achieve the even number of triangles of the multisphere we make the following links between the triangles of $\vec{\mathcal{M}}_2$. For every $i=1,\dots,k-1$;
we choose $n_{i+1}$ different triangles of $\left[\vec{\mathcal{S}_{i}}\right]_-$ and create the link by the tunnel from empty triangle $t'_{i(i+1)}$ to each chosen triangle.
Then, we choose $n_i$ different triangles of $\left[\vec{\mathcal{S}}_{(i+1)}\right]_+$ and create the link by the tunnel from each chosen triangle to empty triangle $t'_{i(i+1)}$.
Finally, we delete the triangles of $\left[\vec{\mathcal{S}_{i}}\right]_-$ and $\left[\vec{\mathcal{S}}_{(i+1)}\right]_+$ that we linked with a tunnel from $\vec{\mathcal{M}}_2$. The resulting triangular configuration is oriented triangular multisphere $\vec{\mathcal{M}}^M_{n_1,n_2,\dots,n_k}$.

Step 4.
For every $i=1,\dots,k-1$, we denote by $\vec{T}_i$ the triangular configuration consisting of all the tunnels linked to oriented triangular sphere $\vec{\mathcal{S}}_i$ in steps 2,3.
We denote the set of triangles $\left[\vec{\mathcal{S}_{i}}\right]_+\cup \left[\vec{T}_i\right]_+$ by $\left[\vec{\mathcal{M}}_{n_1,n_2,\dots,n_k}^M\right]_{+i}$ and the set of triangles $\left[\vec{\mathcal{S}_{i}}\right]_-\cup \left[\vec{T}_i\right]_-$ by $\left[\vec{\mathcal{M}}_{n_1,n_2,\dots,n_k}^M\right]_{-i}$.
An example of $\mathcal{M}_{n_1,n_2,\dots,n_k}^M$ is depicted in Figure~\ref{fig:multisphere}.
\begin{figure}
 \centering
 \includegraphics[width=320pt]{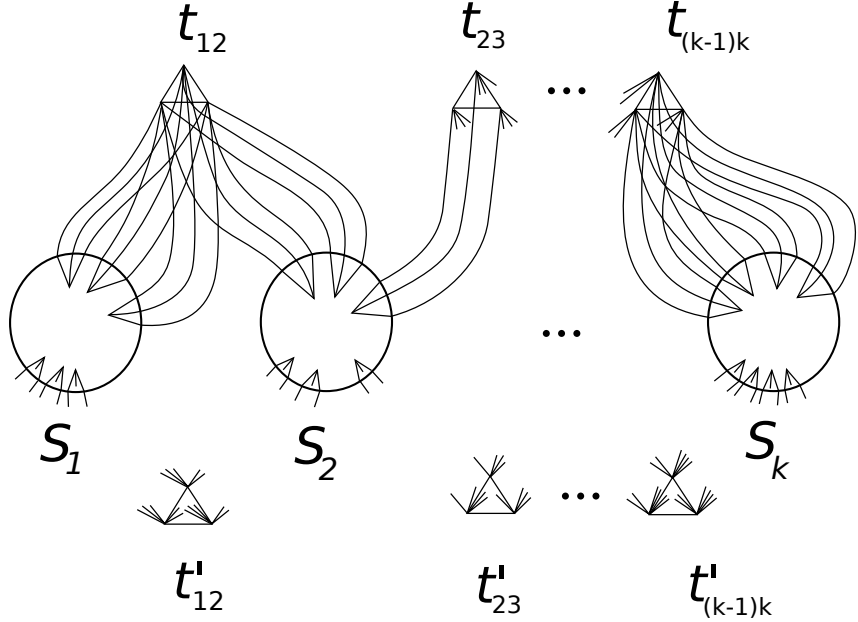}
 \caption{Triangular multisphere $\mathcal{M}_{2,3,1,\dots,4,2}^M$}
 \label{fig:multisphere}
\end{figure}

\begin{proof}[Proof of Proposition~\ref{prop:multispherespace}]
Let $g$ be a generator of $G$ and let $n_i$ be such that $a_i=n_i\times g=\overbrace{g+g+\dots+g}^{n_i}$. We show that the desired configuration $\vec{\mathcal{M}}$ is oriented triangular multisphere $\vec{\mathcal{M}}_{n_1,n_2,\dots,n_k}^M$; where $M=n$. First, we need to show that there is a vector $v\in \ker\vec{\mathcal{M}}$ with entries containing the elements $a_1,a_2,\dots,a_k$. We construct such vector $v$ by setting coordinates indexed by the triangles of $\left[\vec{\mathcal{M}}\right]_{+i}$ (recall step 4 above) to $a_i$ and coordinates indexed by the triangles of $\left[\vec{\mathcal{M}}\right]_{-i}$ to $-a_i$ for $i=1,\dots,k$.

Now, we show that the vector $v$ belongs to $\ker \vec{\mathcal{M}}$.
Let $e$ be an edge different from the edges of empty triangles $t_{i(i+1)}$ and $t'_{i(i+1)}$. Then the edge $e$ is incident with two triangles and the equation indexed by $e$ is $a_i-a_i=0$. Let $e$ be an edge of an empty triangle $t_{i(i+1)}$ or $t'_{i(i+1)}$. Then the edge $e$ is incident with $n_i$ triangles from $\left[\vec{\mathcal{M}}\right]_{-(i+1)}$ and $n_{i+1}$ triangles from $\left[\vec{\mathcal{M}}\right]_{+i}$. So, the equation indexed by $e$ is $$n_{i+1}\times a_i-n_i\times a_{i+1}=n_{i+1}\times(n_i\times g)-n_i\times (n_{i+1}\times g)=0.$$ Hence, the vector $v$ belongs to $\ker\vec{\mathcal{M}}$.

A \dfn{triangle path} is a sequence of triangles $t_1,\dots,t_k$ such that $t_i$ and $t_{i+1}$ have a common edge, for every $i=1,\dots,k-1$. Next, we prove a claim.

\begin{clm}
\label{lem:multispheretrianglesdetermined}
 Let $e$ be an edge of $\vec{\mathcal{M}}$. Let $t_1,\dots,t_d$ be triangles incident with $e$ in any order. Let $v$ be a vector from $\ker \vec{\mathcal{M}}$. Then the entries of $v$ indexed by $t_2,\dots,t_d$ are determined by the entry indexed by $t_1$. 
\end{clm}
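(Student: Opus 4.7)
The plan is to classify the edges of $\vec{\mathcal{M}}$ by type and treat each case using the structure assembled in the four steps. Most edges of $\vec{\mathcal{M}}$ are \emph{simple}: they lie in the interior of some sphere $\vec{\mathcal{S}}_j$, in the interior of some tunnel, or along the seam where a deleted sphere triangle was replaced by a tunnel end. Each simple edge is incident with exactly two triangles $t_1,t_2$, so the incidence equation for $v\in\ker\vec{\mathcal{M}}$ reads $v^{t_1}+v^{t_2}=0$, forcing $v^{t_2}=-v^{t_1}$. The only edges not of this type are the three edges of each empty triangle $t_{i(i+1)}$ or $t'_{i(i+1)}$ introduced in Step~1, and since these empty triangles are pairwise disjoint, each remaining edge $e$ lies on exactly one of them.

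Fix such an edge $e$ of an empty triangle $t_{i(i+1)}$; the argument for $t'_{i(i+1)}$ is the same. By Step~2, $e$ is incident with two families: group $A$, the $n_{i+1}$ triangles at positive ends of tunnels running from $t_{i(i+1)}$ into deleted triangles of $[\vec{\mathcal{S}}_i]_-$, and group $B$, the $n_i$ triangles at negative ends of tunnels running from deleted triangles of $[\vec{\mathcal{S}}_{i+1}]_+$ into $t_{i(i+1)}$. The crucial step is to show that within each group the value of $v$ is constant. For a fixed tunnel $T$ in group $A$, Proposition~\ref{prop:tunnel} says its three positive-end triangles carry a common value $\alpha$ and its three negative-end triangles carry $-\alpha$; the latter are glued edgewise to triangles of $[\vec{\mathcal{S}}_i]_+$, and propagating via the simple-edge equations through the surviving part of $\vec{\mathcal{S}}_i$ (which stays edge-connected because the sphere has more than $4\max_j n_j$ triangles while only a bounded number of them are deleted) forces every positive triangle of $\vec{\mathcal{S}}_i$ to carry the same value $\alpha_i := \alpha$. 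Pulling back through every other tunnel in group $A$ shows that group $A$ is constant and equal to $\alpha_i$, and the symmetric argument gives a constant $\beta_{i+1}$ for group $B$.

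The incidence equation at $e$ then reads $n_{i+1}\alpha_i + n_i\beta_{i+1} = 0$. The integers $n_j$ are the multiplicities in $a_j = n_j\times g$ where $g$ generates the cyclic subgroup $G$ of $(\mathbb{F},+)$; so either $\mathrm{char}\,\mathbb{F}=0$ and each $n_j\in\BN$, or $\mathrm{char}\,\mathbb{F}=p$ and $G$ has order $p$ with $n_j\in\{1,\dots,p-1\}$. In both cases $n_i,n_{i+1}$ are invertible in $\mathbb{F}$, so knowing $v^{t_1}$ identifies its own group's common value, and the edge equation then pins down the other group's value, hence all remaining entries. The main obstacle I anticipate is the sphere-rigidity step: one must verify that after deleting the prescribed triangles, what remains of each $\vec{\mathcal{S}}_i$ is still edge-connected so that the simple-edge propagation reaches every triangle; this follows from the generous size bound $m>4n_i$ imposed in Step~1 together with the regular structure of the subdivisions used to build $\vec{\mathcal{S}^m}$.
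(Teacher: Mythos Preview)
Your argument is correct and follows essentially the same route as the paper: split into degree-$2$ edges versus edges of the empty triangles $t_{i(i+1)},t'_{i(i+1)}$, propagate through tunnels and spheres to see that each of the two families of incident triangles carries a single common value, and then solve the resulting edge equation. Your treatment of the last step is in fact slightly cleaner than the paper's: you observe directly that $n_i,n_{i+1}$ are units in $\mathbb{F}$ (nonzero integers in characteristic~$0$, residues in $\{1,\dots,p-1\}$ in characteristic~$p$), whereas the paper first rescales $v$ so that $v^{t_1}\in G$ and then appeals to the prime/infinite order of $G$ and Lagrange's theorem---an unnecessary detour, since the equation $n_{i+1}\alpha_i+n_i\beta_{i+1}=0$ already determines $\beta_{i+1}$ over $\mathbb{F}$ once $n_i$ is invertible.
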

{\bf Proof of Claim~\ref{lem:multispheretrianglesdetermined}.}
If $e$ is neither an edge of an empty triangle $t_{i(i+1)}$ nor $t'_{i(i+1)}$, then the edge $e$ has degree $2$ and the lemma follows.

Suppose that $e$ is an edge of an empty triangle $t_{i(i+1)}$ or $t'_{i(i+1)}$. The entries indexed by the triangles that belong to $\left[\vec{\mathcal{M}}\right]_{-i}$ have the same value, since they are connected by a triangle path such that each inner edge of the triangle path has degree $2$ in $\vec{\mathcal{M}}$. The same holds for the entries indexed by the triangles of $\left[\vec{\mathcal{M}}\right]_{+(i+1)}$.
Without loss of generality we can suppose that $t_1$ belongs to $\left[\vec{\mathcal{M}}\right]_{-i}$ and let $t_l$ be an element of $\{t_2,\dots,t_d\}$ that belongs to $\left[\vec{\mathcal{M}}\right]_{+(i+1)}$.
Let $v_1$ be the entry of $v$ indexed by $t_1$ and let $v_l$ be an entry indexed by $t_l$.
If $v_1\notin G$, we choose an appropriate scalar $\alpha\in\mathbb{F}$ such that $\alpha v_1\in G$ and set $v:=\alpha v$. Then the following equation holds
\begin{equation}
\label{eq:multi}
n_{i+1}\times v_1=n_i\times v_l.
\end{equation} 
We show that there is only one solution $v_l$.
We use the fact that every cyclic subgroup $G$ of the additive group of the field $\mathbb{F}$ has a prime or an infinite order.
In the case of an infinite order, Equation~\ref{eq:multi} has only one solution $v_l$. In the case of a prime order, since the integers $n_i$ and $n_{i+1}$ do not divide the group order and by Lagrange's theorem, Equation~\ref{eq:multi} has only one solution $v_l$.
{\bf End of proof of Claim~\ref{lem:multispheretrianglesdetermined}.}

Finally, we prove the proposition. Since there is a triangle path between any two triangles of $\vec{\mathcal{M}}$ and by the above lemma, the dimension of the kernel $\ker\vec{\mathcal{M}}$ is $1$. From the definition of the vector $v$, all entries of $v$ are non-zero.
\end{proof}

\section{Weight enumerator}
In this section, we state the connection between the weight enumerators of finite linear codes and the weight enumerators of their triangular representations. 



We define the \dfn{extended weight enumerator} (with respect to a fixed basis) by
\begin{equation*}
W^k_\mathcal{C}(x):=\sum_{\substack{c\in \mathcal{C}\\\vd{c}=k}} x^{w(c)}.
\end{equation*}

If a code $\mathcal{C}$ has dimension $d$, then
\begin{equation*}
W_\mathcal{C}(x)=\sum_{k=0}^d W^k_\mathcal{C}(x).
\end{equation*}

Recall that a basis $B$ is representable if for each $b\in B$ each entry of $b$ belongs to a cyclic subgroup $G(b)$ of the additive group of the field $\mathbb{F}$. A code $\mathcal{C}$ is even if all codewords have an even weight. 

\begin{prop}
\label{prop:extended polynomial}
Let $\mathcal{C}$ be an even finite linear code with a representable basis $B$ and let $\Delta^\mathcal{C}_B$ be a balanced triangular configuration provided by Proposition~\ref{prop:balancedrepr}. Then
\begin{equation*}
W^k_{\ker\Delta^\mathcal{C}_B}(x)=W^k_\mathcal{C}(x)x^{k\ex{\Delta^\mathcal{C}_B}}.
\end{equation*}
\end{prop}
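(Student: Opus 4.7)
The plan is to transport the sum defining $W^k_{\ker\Delta^\mathcal{C}_B}(x)$ back to $\mathcal{C}$ via the bijection $f$ of Definition~\ref{dfn:mappingf}, and then apply the weight formula of Proposition~\ref{prop:weightf}. All the necessary machinery is already in place; the argument is essentially a change of variables.

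First I would fix a basis of $\ker\Delta^\mathcal{C}_B$ with respect to which the degree on the target side is computed, namely the image basis $f(B) = \{\gen{\Delta^\mathcal{C}_{b}} : b \in B\}$. That this is indeed a basis of $\ker\Delta^\mathcal{C}_B$ was established in Theorem~\ref{thm:trianrep}: the vectors $\gen{\Delta^\mathcal{C}_{b}}$ are linearly independent by Corollary~\ref{cor:genentries}, and they span $\ker\Delta^\mathcal{C}_B$ by the spanning argument given in the proof of Theorem~\ref{thm:trianrep}. Since $f$ is linear, the expansion $c = \sum_{b \in B} \alpha_b b$ is carried by $f$ to the expansion $f(c) = \sum_{b \in B} \alpha_b \gen{\Delta^\mathcal{C}_{b}}$ with the \emph{same} coefficients $\alpha_b$. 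In particular $\vd{f(c)} = \vd{c}$, so $f$ restricts to a bijection between $\{c \in \mathcal{C} : \vd{c} = k\}$ and $\{c' \in \ker\Delta^\mathcal{C}_B : \vd{c'} = k\}$.

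With this identification in hand, I would simply reindex the sum using the bijection and substitute the weight formula:
\begin{equation*}
W^k_{\ker\Delta^\mathcal{C}_B}(x) = \sum_{\substack{c' \in \ker\Delta^\mathcal{C}_B \\ \vd{c'} = k}} x^{w(c')} = \sum_{\substack{c \in \mathcal{C} \\ \vd{c} = k}} x^{w(f(c))}.
\end{equation*}
Now Proposition~\ref{prop:weightf} gives $w(f(c)) = w(c) + \vd{c}\ex{\Delta^\mathcal{C}_B}$, and on the indexing set $\vd{c} = k$, so the exponent simplifies to $w(c) + k\ex{\Delta^\mathcal{C}_B}$. Pulling the factor $x^{k\ex{\Delta^\mathcal{C}_B}}$ out of the sum yields $W^k_\mathcal{C}(x) \cdot x^{k\ex{\Delta^\mathcal{C}_B}}$, as required.

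The main (and essentially only) delicate point is the matching of the two notions of degree: the statement uses $\vd{\cdot}$ with respect to the representable basis $B$ on the $\mathcal{C}$ side, whereas on the $\ker\Delta^\mathcal{C}_B$ side it implicitly refers to the image basis $f(B)$. Once one observes that $f$, being linear and bijective, transports one basis expansion to the other coefficient-for-coefficient, this matching is automatic and the rest of the proof is a direct one-line computation using Proposition~\ref{prop:weightf}.
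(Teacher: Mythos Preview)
Your proof is correct and follows essentially the same route as the paper: use the bijection $f$ of Definition~\ref{dfn:mappingf}, observe that it preserves degree, reindex the sum, and apply Proposition~\ref{prop:weightf}. If anything, you are more explicit than the paper about why the degree is preserved (identifying the basis $f(B)$ on the target side and noting that $f$ carries coefficients to coefficients), a point the paper leaves implicit.
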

\begin{proof}
Let $f$ be the mapping from Definition~\ref{dfn:mappingf}.
For every codeword $c$ of degree $k$ of $\mathcal{C}$ there is codeword $f(c)$ of degree $k$ of $\ker\Delta^\mathcal{C}_B$.
By Proposition~\ref{prop:weightf}, $w(f(c))=w(c)+k\ex{\Delta^\mathcal{C}_B}$.
Therefore,
\begin{equation*}
\begin{split}
W^k_{\ker\Delta^\mathcal{C}_B}(x)& =\sum_{\substack{f(c)\in \ker\Delta^\mathcal{C}_B\\\vd{f(c)}=k}} x^{w(f(c))} =\sum_{\substack{c\in \mathcal{C}\\\vd{c}=k}} x^{w(c)+k\ex{\Delta^\mathcal{C}_B}}
 =W^k_{\mathcal{C}}(x)x^{k\ex{\Delta^\mathcal{C}_B}}.
\end{split}
\end{equation*}
\end{proof}
\begin{prop}
\label{prop:weights}
Let $\mathcal{C}$ be an even finite linear code of length $n$ with a representable basis $B$ and let $\Delta^\mathcal{C}_B$ be a balanced triangular configuration provided by Proposition~\ref{prop:balancedrepr}.
Then the inequality $k\ex{\Delta^\mathcal{C}_B}\leq w(d)\leq k\ex{\Delta^\mathcal{C}_B}+n$ holds for every codeword $d$ of degree $k$ of $\ker\Delta^\mathcal{C}_B$.
\end{prop}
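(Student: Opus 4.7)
The plan is to reduce the statement to a direct application of Proposition~\ref{prop:weightf}, exploiting the fact that the linear map $f$ from Definition~\ref{dfn:mappingf} is a bijection $\mathcal{C}\to\ker\Delta^\mathcal{C}_B$ (Theorem~\ref{thm:trianrep}). First, I would fix the convention that the degree of a codeword $d\in\ker\Delta^\mathcal{C}_B$ is measured with respect to the basis $\{f(b)\mid b\in B\}$ of $\ker\Delta^\mathcal{C}_B$; this basis is well-defined since $f$ is injective and $\dim\mathcal{C}=\dim\ker\Delta^\mathcal{C}_B$. Because $f$ is linear and bijective, writing $d=f(c)$ for the unique $c\in\mathcal{C}$ and expanding $c=\sum_{b\in B}\alpha_b b$ yields $d=\sum_{b\in B}\alpha_b f(b)$ with the same scalars, so $\vd{d}=\vd{c}$.

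Next I would invoke Proposition~\ref{prop:weightf} to obtain
\begin{equation*}
w(d)=w(f(c))=w(c)+\vd{c}\,\ex{\Delta^\mathcal{C}_B}=w(c)+k\,\ex{\Delta^\mathcal{C}_B}.
\end{equation*}
Since $c\in\mathbb{F}^n$ has weight between $0$ and $n$, the desired double inequality
\[
k\,\ex{\Delta^\mathcal{C}_B}\leq w(d)\leq k\,\ex{\Delta^\mathcal{C}_B}+n
\]
follows immediately.

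There is no real obstacle here; the only point that requires a sentence of care is the agreement between the degree of $d\in\ker\Delta^\mathcal{C}_B$ and the degree of its preimage $c\in\mathcal{C}$, which rests on $f$ being a linear bijection. Once this is observed, the proposition is a one-line consequence of Proposition~\ref{prop:weightf} combined with the trivial bound $0\leq w(c)\leq n$ on the weight of any vector in $\mathbb{F}^n$.
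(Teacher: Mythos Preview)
Your proposal is correct and follows essentially the same route as the paper: write $d=f(c)$ using the bijection $f$, apply Proposition~\ref{prop:weightf} to get $w(d)=w(c)+k\,\ex{\Delta^\mathcal{C}_B}$, and bound $0\le w(c)\le n$. Your extra sentence justifying $\vd{d}=\vd{c}$ via the linearity and bijectivity of $f$ makes explicit a step the paper leaves implicit, but the argument is otherwise identical.
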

\begin{proof}
Let $f$ be the mapping from Definition~\ref{dfn:mappingf}.
By Proposition~\ref{prop:weightf}, $w(d)=w(f^{-1}(d))+k\ex{\Delta^\mathcal{C}_B}$.
Since $0\leq w(f^{-1}(d))\leq n$ for every $d\in \ker\Delta^\mathcal{C}_B$, the inequality $k\ex{\Delta^\mathcal{C}_B}\leq w(d)\leq k\ex{\Delta^\mathcal{C}_B}+n$ holds.
\end{proof}

\begin{cor}
Let $\mathcal{C}$ be an even finite linear code of dimension $d$ and length $n$ with a representable basis $B$ and let $\Delta^\mathcal{C}_B$ be a balanced triangular configuration provided by Proposition~\ref{prop:balancedrepr} such that $n<\ex{\Delta^\mathcal{C}_B}$. Denote $\ex{\Delta^\mathcal{C}_B}$ by $e$. Let $\sum_{i=0}^{de+n}a_ix^i$ be the weight enumerator of $\ker\Delta^\mathcal{C}_B$. Then
\begin{equation*}
W^k_{\ker\Delta^\mathcal{C}_B}(x)=\sum_{i=ke}^{ke+n}a_ix^i. \qed
\end{equation*}
\end{cor}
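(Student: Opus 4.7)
The plan is to combine the bound from Proposition~\ref{prop:weights} with the hypothesis $n < \ex{\Delta^\mathcal{C}_B}$ to separate the contributions of different degrees in the weight enumerator.

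First I would write the total weight enumerator as the sum of its degree components:
\begin{equation*}
W_{\ker\Delta^\mathcal{C}_B}(x)=\sum_{k=0}^{d}W^k_{\ker\Delta^\mathcal{C}_B}(x),
\end{equation*}
which is exactly the decomposition of $\ker\Delta^\mathcal{C}_B$ into disjoint sets of codewords according to their degree with respect to the basis $\{f(b):b\in B\}$ (recall that $f$ is a bijection by Theorem~\ref{thm:trianrep}, so degrees carry over from $\mathcal{C}$ to $\ker\Delta^\mathcal{C}_B$).

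Next I would invoke Proposition~\ref{prop:weights}: every codeword of degree $k$ in $\ker\Delta^\mathcal{C}_B$ has weight in the interval $[ke,ke+n]$. Hence each polynomial $W^k_{\ker\Delta^\mathcal{C}_B}(x)$ contains only monomials $x^i$ with $ke\leq i\leq ke+n$.

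The key observation is then that the intervals $[ke,ke+n]$ for $k=0,1,\dots,d$ are pairwise disjoint, since the hypothesis $n<e$ forces $(k+1)e>ke+n$, so consecutive intervals do not even touch. Therefore the decomposition of $W_{\ker\Delta^\mathcal{C}_B}(x)=\sum_{i=0}^{de+n}a_ix^i$ into the sum $\sum_k W^k_{\ker\Delta^\mathcal{C}_B}(x)$ is just a partition of its monomials into disjoint ranges of exponents. Reading off the range $i\in[ke,ke+n]$ yields $W^k_{\ker\Delta^\mathcal{C}_B}(x)=\sum_{i=ke}^{ke+n}a_ix^i$, as claimed. There is no real obstacle here; the proof is essentially a one-line bookkeeping argument once the disjointness of the weight ranges, guaranteed by $n<\ex{\Delta^\mathcal{C}_B}$, is pointed out.
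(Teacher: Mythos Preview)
Your proposal is correct and is precisely the argument the paper has in mind: the corollary is marked with a \qed\ because it follows immediately from Proposition~\ref{prop:weights} together with the decomposition $W_{\ker\Delta^\mathcal{C}_B}(x)=\sum_{k}W^k_{\ker\Delta^\mathcal{C}_B}(x)$ and the disjointness of the intervals $[ke,ke+n]$ forced by $n<e$. There is nothing to add.
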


\begin{thm}
\label{thm:weight polynomial}
Let $\mathcal{C}$ be an even finite linear code of dimension $d$ and length $n$ with a representable basis $B$ and let $\Delta^\mathcal{C}_B$ be a balanced triangular configuration provided by Proposition~\ref{prop:balancedrepr} such that $n<\ex{\Delta^\mathcal{C}_B}$. Denote $\ex{\Delta^\mathcal{C}_B}$ by $e$. Let $\sum_{i=0}^{de+n}a_ix^i$ be the weight polynomial of $\ker\Delta^\mathcal{C}_B$. Then
\begin{equation*}
W_\mathcal{C}(x)=\sum_{i=0}^{de+n}a_ix^{i\bmod e}.
\end{equation*}
\end{thm}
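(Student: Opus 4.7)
The plan is to decompose the weight enumerator of $\ker\Delta^\mathcal{C}_B$ according to degree with respect to $B$ and then assemble the result from Proposition~\ref{prop:extended polynomial} and Proposition~\ref{prop:weights}, exploiting the hypothesis $n < e$ to guarantee that the contributions from different degrees occupy disjoint ranges of exponents.

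First, I would observe that Proposition~\ref{prop:weights} implies that every codeword $d \in \ker\Delta^\mathcal{C}_B$ of degree $k$ has weight in the interval $[ke,\, ke+n]$. Since $n < e$, the intervals $[ke,\, ke+n]$ for $k = 0, 1, \ldots, d$ are pairwise disjoint and all contained in $[0,\, de+n]$. It follows that $a_i = 0$ for every $i$ outside $\bigcup_{k=0}^d [ke,\, ke+n]$, and for each $k$,
\begin{equation*}
W^k_{\ker\Delta^\mathcal{C}_B}(x) = \sum_{i=ke}^{ke+n} a_i x^i.
\end{equation*}

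Next, I would apply Proposition~\ref{prop:extended polynomial}, which gives $W^k_{\ker\Delta^\mathcal{C}_B}(x) = W^k_\mathcal{C}(x)\, x^{ke}$. Dividing, one gets $W^k_\mathcal{C}(x) = \sum_{i=ke}^{ke+n} a_i x^{i-ke}$. Because $ke \leq i \leq ke+n < (k+1)e$, the exponent $i-ke$ is exactly $i \bmod e$, so $W^k_\mathcal{C}(x) = \sum_{i=ke}^{ke+n} a_i x^{i \bmod e}$.

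Finally, summing over $k$ and using the identity $W_\mathcal{C}(x) = \sum_{k=0}^d W^k_\mathcal{C}(x)$ together with the vanishing of $a_i$ outside the union of intervals,
\begin{equation*}
W_\mathcal{C}(x) = \sum_{k=0}^d \sum_{i=ke}^{ke+n} a_i x^{i \bmod e} = \sum_{i=0}^{de+n} a_i x^{i \bmod e}.
\end{equation*}
There is no real obstacle here: the substantive work has already been carried out in Propositions~\ref{prop:weightf}, \ref{prop:extended polynomial} and~\ref{prop:weights}. The only point requiring care is the disjointness of the weight bands $[ke,\, ke+n]$, which is precisely what the condition $n < e$ buys, and which is also what forces $i - ke$ to coincide with $i \bmod e$ on the relevant range.
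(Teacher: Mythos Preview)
Your proof is correct and follows essentially the same idea as the paper's. The paper's argument is slightly more direct: rather than passing through Propositions~\ref{prop:extended polynomial} and~\ref{prop:weights}, it works codeword by codeword via the bijection $f$, invoking Proposition~\ref{prop:weightf} to write $w(f(c))=w(c)+\vd{c}\,e$ and then observing immediately that $w(f(c))\bmod e = w(c)$ since $w(c)\le n<e$. Your version unpacks the same computation through the degree-stratified enumerators $W^k$, which is a harmless reorganization of the same content.
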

\begin{proof}
The inequality $w(c)\leq n$ holds for every codeword $c\in \mathcal{C}$.
Let $f$ be the mapping from Definition~\ref{dfn:mappingf}.
By Proposition~\ref{prop:weightf}, $w(f(c))=w(c)+\vd{c}e$ for every codeword $c$ of $\mathcal{C}$.
Since $n<e$, the following equality holds.
\begin{equation*}
w(f(c))\bmod e = (w(c)+\vd{c}e)\bmod e = w(c).
\end{equation*}
Hence,
\begin{equation*}
W_\mathcal{C}(x)=\sum_{i=0}^{de+n}a_ix^{i\bmod e}.
\end{equation*}
\end{proof}

\subsection{Proof of Theorem~\ref{thm:repr1}}
The \dfn{double code}, denoted by $\mathcal{C}^2$, of a linear code $\mathcal{C}$ of length $n$ is the code $$\mathcal{C}^2=\left\{\left(c^1,\dots,c^n,c^1,\dots,c^n\right) : c\in C\right\}.$$
\begin{prop}
\label{prop:double}
Let $\mathcal{C}$ be a linear code and let $\mathcal{C}^2$ be its double code. Then, the double code $\mathcal{C}^2$ is even and the code $\mathcal{C}$ is a punctured code of its double code $\mathcal{C}^2$ and there is a linear bijection between $\mathcal{C}$ and $\mathcal{C}^2$ that maps minimal codewords to minimal codewords and $W_\mathcal{C}(x)=W_{\mathcal{C}^2}(x^{\frac{1}{2}})$.\qed
\end{prop}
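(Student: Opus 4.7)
The plan is to verify the four assertions of the proposition directly from the definition of the double code, with the natural diagonal embedding $\phi : \mathcal{C} \to \mathcal{C}^2$ given by $\phi(c) = (c^1,\dots,c^n,c^1,\dots,c^n)$ providing the required linear bijection. The key observation throughout is that the support of $\phi(c)$ is the disjoint union of two copies of the support of $c$, so weights double and coordinate-wise domination is preserved in both directions.

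First I would note that $\phi$ is visibly $\mathbb{F}$-linear and injective, and that its image is by definition $\mathcal{C}^2$, so $\phi$ is a linear bijection $\mathcal{C} \to \mathcal{C}^2$. For evenness of $\mathcal{C}^2$, every element is of the form $\phi(c)$ and has weight exactly $2w(c)$, which is even. For the punctured-code claim, let $S = \{n+1,\dots,2n\}$; then by definition of puncturing, $\mathcal{C}^2 / S$ consists precisely of the first-half projections $(c^1,\dots,c^n)$ of elements $\phi(c) \in \mathcal{C}^2$, which is exactly $\mathcal{C}$.

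Next I would verify that $\phi$ preserves the partial order $\preceq$ in both directions, and hence carries minimal codewords to minimal codewords. Indeed, for $c,d \in \mathcal{C}$, the relation $c \preceq d$ holds if and only if $c^i \neq 0 \Rightarrow d^i \neq 0$ for all $i$, which is equivalent to the analogous implication holding in both halves of the doubled codewords, i.e.\ $\phi(c) \preceq \phi(d)$. Since $\phi$ is a bijection, if $\phi(c)$ were dominated by some $\phi(c') \in \mathcal{C}^2$ with $\phi(c') \neq \phi(c)$, then $c' \preceq c$ and $c' \neq c$, so minimality is transported in both directions.

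Finally, for the weight enumerator identity, the weight-doubling observation $w(\phi(c)) = 2w(c)$ yields
\begin{equation*}
W_{\mathcal{C}^2}(x) = \sum_{c\in\mathcal{C}} x^{w(\phi(c))} = \sum_{c\in\mathcal{C}} x^{2w(c)} = W_\mathcal{C}(x^2),
\end{equation*}
and substituting $x \mapsto x^{1/2}$ gives $W_{\mathcal{C}^2}(x^{1/2}) = W_\mathcal{C}(x)$. There is no real obstacle here; the proposition is essentially a bookkeeping statement whose only mild subtlety is checking that the preservation of $\preceq$ under $\phi$ goes both ways so that minimality is genuinely preserved, but this is immediate from the doubled-support description.
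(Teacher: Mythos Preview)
Your argument is correct and complete. The paper omits the proof entirely (the \qed immediately follows the statement), so there is nothing to compare against; your direct verification via the diagonal embedding $\phi$ is exactly the intended straightforward check.
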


\begin{proof}[Proof of Theorem~\ref{thm:repr1}.]
Let $\mathcal{C}^2$ be the double code of $\mathcal{C}$. The code $\mathcal{C}^2$ is even. Let $B^2$ be the basis $\{(b^1,\dots,b^n,b^1,\dots,b^n)\vert b\in B\}$ of $\mathcal{C}^2$. The basis $B^2$ is representable. Let $\Delta^{\mathcal{C}^2}_{B^2}$ be a balanced triangular configuration provided by Proposition~\ref{prop:balancedrepr} such that $\ex{\Delta^{\mathcal{C}^2}_{B^2}}>n_2$, where $n_2=2n$ is the length of $\mathcal{C}^2$. We denote $\ex{\Delta^{\mathcal{C}^2}_{B^2}}$ by $e$.
By Theorem~\ref{thm:trianrep}, the code $\mathcal{C}^2$ is equal to $\ker\Delta^{\mathcal{C}^2}_{B^2}/S'$ for some set of indices $S'$ and there exists a linear bijection $f':\mathcal{C}^2\mapsto\ker\Delta^{\mathcal{C}^2}_{B^2}$ which maps minimal codewords to minimal codewords.
By Proposition~\ref{prop:double}, the code $\mathcal{C}$ is equal to $\mathcal{C}^2/S''$ where $S''=\{n+1,\dots,2n\}$ and there is a linear bijection $f'':\mathcal{C}\mapsto\mathcal{C}^2$ which maps minimal codewords to minimal codewords. Therefore, the code $\mathcal{C}$ is equal to $\ker\Delta^{\mathcal{C}^2}_{B^2}/(S'\cup S'')$ and there is a linear bijection $f:\mathcal{C}\mapsto\ker\Delta^{\mathcal{C}^2}_{B^2}$ which maps minimal codewords to minimal codewords. Hence, the code $\mathcal{C}$ is triangular representable and $\Delta^{\mathcal{C}^2}_{B^2}$ is its triangular representation. We denote $\Delta^{\mathcal{C}^2}_{B^2}$ by $\Delta$.

By Proposition~\ref{prop:expcont}, $e=\lvert S'\rvert/\dim\mathcal{C}^2$.
Let $S(\ker\Delta,\mathcal{C})=S'\cup S''$. The cardinality $\lvert S''\rvert$ is equal to $n$. Hence, $e=(\lvert S(\ker\Delta,\mathcal{C})\rvert-n)/\dim\mathcal{C}^2=(\lvert S(\ker\Delta,\mathcal{C})\rvert-n)/\dim\mathcal{C}$.

If the code $\mathcal{C}$ is finite, the formula for the weight enumerator follows from Theorem~\ref{thm:weight polynomial} and from Proposition~\ref{prop:double}.

\end{proof}
\begin{proof}[Proof of Corollary~\ref{thm:repr2}.]
The additive group of $GF(p)$, where $p$ is a prime, is cyclic. In the case of rationals, we multiply the basis vectors by a sufficiently large integer, so that all vectors are integral. Hence, all elements of all basis vectors belong to the cyclic group of integers. Then, we use Theorem~\ref{thm:repr1}.
\end{proof}

\subsection{Multivariate weight enumerator}
\label{sec:multi}

Let $\mathcal{C}$ be a linear code of length $n$. Let $R$ be the set of coordinates of $\mathcal{C}$. The \dfn{assignment of variables to coordinates} is a mapping $\lambda$ from $R$ to the set of indices of variables $\{1,\dots,k\}$. The \dfn{multivariate weight enumerator} of $\mathcal{C}$ is $$W_\mathcal{C}^\lambda(x_1,\dots,x_k)=\sum_{c\in\mathcal{C}}\prod_{\substack{i=1\\c_i\neq 0}}^nx_{\lambda(i)}.$$

\begin{thm}
\label{thm:multrepr1}
Let $\mathcal{C}$ be a finite linear code over a field $\mathbb{F}$, $\mathcal{C}\subseteq\mathbb{F}^n$. Let $\lambda$ be an assignment of variables.
If $\mathcal{C}$ is triangular representable, then there exists a triangular configuration $\Delta$ and an injection $\mu:\{1,\dots,2n\}\mapsto T(\Delta)$
such that:
if $$\sum_{\substack{i_1+i_2+\dots+i_k\leq m\\i_1\geq0,i_2\geq0,\dots,i_k\geq0}}a_{i_1i_2\dots i_k}x_1^{i_1}x_2^{i_2}\dots x_k^{i_k}$$ is the multivariate weight enumerator $W_{\ker\Delta}^{\lambda'}(x_1,\dots,x_k)$ of $\ker\Delta$ with the assignment of variables $\lambda'$ defined: $\lambda'(t):=\lambda(\mu^{-1}(t))$ if $t\in\mu(\{1,\dots,n\})$ and $\lambda'(t):=\lambda(\mu^{-1}(t)-n)$ if $t\in\mu(\{n+1,\dots,2n\})$ and $\lambda'(t):=k$ if $t\notin\mu(\{1,\dots,2n\})$.
Then
\begin{equation*}
W_\mathcal{C}^\lambda(x_1,\dots,x_k)=\sum_{\substack{i_1+i_2+\dots+i_k\leq m\\i_1\geq0,i_2\geq0,\dots,i_k\geq0}}a_{i_1i_2\dots i_k}x_1^{(i_1/2)}x_2^{(i_2/2)}\dots x_{k-1}^{(i_{k-1}/2)}x_k^{((i_k\bmod e)/2)},
\end{equation*}
where $e=(\lvert S(\ker\Delta,\mathcal{C})\rvert-n)/\dim\mathcal{C}$.
\end{thm}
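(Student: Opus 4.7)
The plan is to imitate the proof of Theorem~\ref{thm:repr1} but keep track of which triangles of $\Delta$ carry which variable. First, because the code $\mathcal{C}$ is triangular representable, by (the proof of) Corollary~\ref{thm:repr2} together with Theorem~\ref{thm:nonrepr}, we may assume that $\mathcal{C}$ has a representable basis $B$. Form the double code $\mathcal{C}^{2}$ with basis $B^{2}=\{(b^{1},\dots,b^{n},b^{1},\dots,b^{n}):b\in B\}$; by Proposition~\ref{prop:double} the code $\mathcal{C}^{2}$ is even and the basis $B^{2}$ is representable. Apply Proposition~\ref{prop:balancedrepr} to get a balanced triangular configuration $\Delta:=\Delta^{\mathcal{C}^{2}}_{B^{2}}$ with $\ex{\Delta}>2n$, and set $e:=\ex{\Delta}$. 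Define the injection $\mu\colon\{1,\dots,2n\}\mapsto T(\Delta)$ by $\mu(i):=B^{2n}_{i}$, matching Definition~\ref{dfn:mappingf}. Theorem~\ref{thm:trianrep} and Proposition~\ref{prop:double} then guarantee $\mathcal{C}=\ker\Delta/S(\ker\Delta,\mathcal{C})$, and by the argument already used for Theorem~\ref{thm:repr1} one has $e=(\lvert S(\ker\Delta,\mathcal{C})\rvert-n)/\dim\mathcal{C}$.

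The next step is to compute, codeword by codeword, the monomial contributed to $W_{\ker\Delta}^{\lambda'}(x_{1},\dots,x_{k})$. Let $c\in\mathcal{C}$ and let $c^{2}\in\mathcal{C}^{2}$ be its doubling; let $f$ be the linear bijection $\mathcal{C}^{2}\to\ker\Delta$ from Definition~\ref{dfn:mappingf}. By Proposition~\ref{prop:mappingf} the entry of $f(c^{2})$ indexed by $\mu(j)$ equals the $j$-th entry of $c^{2}$; hence each non-zero coordinate $c^{i}$ contributes twice, once at $\mu(i)$ and once at $\mu(n+i)$, both carrying variable $x_{\lambda(i)}$ by the definition of $\lambda'$. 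By Proposition~\ref{prop:weightf} the remaining non-zero entries of $f(c^{2})$ lie at triangles of $\Delta^{\mathcal{C}^{2}}_{B^{2}}-B^{2n}$, all labelled by $\lambda'$ with the index $k$, and their number equals $\vd{c^{2}}\cdot e=\vd{c}\cdot e$. Thus
\begin{equation*}
f(c^{2}) \ \text{contributes}\ \prod_{i:\,c^{i}\neq 0}x_{\lambda(i)}^{2}\cdot x_{k}^{\vd{c}\,e}
\end{equation*}
to $W_{\ker\Delta}^{\lambda'}$. Since $f$ is a bijection (Theorem~\ref{thm:trianrep}) and doubling is a bijection $\mathcal{C}\to\mathcal{C}^{2}$ (Proposition~\ref{prop:double}), summing over $c\in\mathcal{C}$ accounts for every term of $W_{\ker\Delta}^{\lambda'}$.

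Finally, invert the substitution. For $j<k$ the exponent of $x_{j}$ in every monomial contributed by some $f(c^{2})$ equals $2\lvert\{i:c^{i}\neq 0,\ \lambda(i)=j\}\rvert$, so the rule $x_{j}^{i_{j}}\mapsto x_{j}^{i_{j}/2}$ recovers $\prod_{i:\,c^{i}\neq 0,\,\lambda(i)=j}x_{j}$. For the variable $x_{k}$, the exponent is $2\lvert\{i:c^{i}\neq 0,\ \lambda(i)=k\}\rvert+\vd{c}\,e$. Because $2\lvert\{i:c^{i}\neq 0,\ \lambda(i)=k\}\rvert\le 2n<e$, taking this exponent modulo $e$ kills precisely the $\vd{c}\,e$ summand and leaves the even number $2\lvert\{i:c^{i}\neq 0,\ \lambda(i)=k\}\rvert$, whose half is the desired contribution $\lvert\{i:c^{i}\neq 0,\ \lambda(i)=k\}\rvert$. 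Summing over all codewords yields $W_{\mathcal{C}}^{\lambda}(x_{1},\dots,x_{k})$ in the form stated.

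The one place that needs care is the last paragraph: the exponent of $x_{k}$ mixes two very different sources (doubled original coordinates assigned to $x_{k}$, and the bookkeeping "extra" tunnel triangles), and the identity $(i_{k}\bmod e)/2$ works only under the strict inequality $e>2n$. This is exactly why Proposition~\ref{prop:balancedrepr} is invoked with the bound $n<\ex{\Delta}$ applied to the doubled length $2n$; verifying that the construction indeed delivers such a $\Delta$, and that no tunnel triangle accidentally gets labelled by $\mu$, is the main (but routine) point to check.
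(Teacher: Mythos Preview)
Your proposal is correct and follows essentially the same route as the paper: pass to the double code $\mathcal{C}^{2}$ with its representable basis (existence via Theorem~\ref{thm:nonrepr}), take the balanced configuration $\Delta=\Delta^{\mathcal{C}^{2}}_{B^{2}}$ with $\ex{\Delta}>2n$, use the injection $\mu$ from Definition~\ref{dfn:mappingf}, and then read off the variable-by-variable weights using Propositions~\ref{prop:mappingf} and~\ref{prop:weightf} together with the inequality $2n<e$ to isolate the $x_{k}$ contribution modulo $e$. The only cosmetic difference is that the paper phrases the bookkeeping via the partial weights $w^{\lambda'}_{j}(f(c))$ and $w^{\lambda''}_{j}(c)$ and subtracts the equations for $j<k$ from the total-weight identity of Proposition~\ref{prop:weightf}, whereas you compute the full monomial of $f(c^{2})$ directly; the content is the same.
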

\begin{proof}
By Theorem~\ref{thm:nonrepr}, the code $\mathcal{C}$ has a representable basis $B$.
Let $\mathcal{C}^2$ be the double code of $\mathcal{C}$. Let $B^2$ be the basis $\{(b^1,\dots,b^n,b^1,\dots,b^n)\vert b\in B\}$ of $\mathcal{C}^2$. The code $\mathcal{C}^2$ is even and the basis $B^2$ is representable. Let $\Delta^{\mathcal{C}^2}_{B^2}$ be a balanced triangular configuration provided by Proposition~\ref{prop:balancedrepr} such that $\ex{\Delta^{\mathcal{C}^2}_{B^2}}>n_2$, where $n_2=2n$ is the length of $\mathcal{C}^2$. The desired configuration $\Delta$ is $\Delta^{\mathcal{C}^2}_{B^2}$. We denote $\ex{\Delta}$ by $e$.
By Theorem~\ref{thm:trianrep}, the code $\mathcal{C}^2$ is equal to $\ker\Delta/S'$ for some set of indices $S'$.
By Proposition~\ref{prop:double}, the code $\mathcal{C}$ is equal to $\mathcal{C}^2/S''$ where $S''=\{n+1,\dots,2n\}$. Therefore, the code $\mathcal{C}$ is equal to $\ker\Delta/(S'\cup S'')$. 
By Proposition~\ref{prop:expcont}, $e=\lvert S'\rvert/\dim\mathcal{C}^2$.
Let $S(\ker\Delta,\mathcal{C})=S'\cup S''$. The cardinality $\lvert S''\rvert$ is equal to $n$. Hence, $e=(\lvert S(\ker\Delta,\mathcal{C})\rvert-n)/\dim\mathcal{C}^2=(\lvert S(\ker\Delta,\mathcal{C})\rvert-n)/\dim\mathcal{C}$.

We define an assignment of variables of $\mathcal{C}^2$ in the following way: $\lambda''(i):=\lambda''(n+i):=\lambda(i)$ for $i=1,\dots,n$.
Let $\mu:\{1,\dots,2n\}\mapsto T(\Delta)$ be the injection from Definition~\ref{dfn:mappingf}. Then, $S'=T(\Delta)\setminus\mu(\{1,\dots,2n\})$.

%

The weight of the variable indexed by $j$ in a codeword $c$ with respect to an assignment of variables $\lambda$ is the number of nonzero coordinates of $c$ assigned to variable $j$. We denote this weight by $w^\lambda_j(c)$. Let $f:\mathcal{C}^2\mapsto\ker\Delta$ be the mapping from Definition~\ref{dfn:mappingf}.
By Proposition~\ref{prop:mappingf}, $f(c)^t=c^{\mu^{-1}(t)}$ for $t\in\mu^{-1}(\{1,\dots,2n\})$ and $c\in\mathcal{C}^2$.
For $j=1,\dots,k-1$ we have $w^{\lambda'}_j(f(c))=w^{\lambda''}_j(c)$, since $\lambda'(t)=k\neq j$ for all $t\in S'=T(\Delta)\setminus\mu(\{1,\dots,2n\})$ and $\lambda'(t)=\lambda''(\mu^{-1}(t))$ for $t\in\mu^{-1}(\{1,\dots,2n\})$. The Hamming weight of $c$ satisfies $w(c)=\sum_{j=1}^kw^\lambda_j(c)$ for arbitrary $\lambda$.
By Proposition~\ref{prop:weightf}, $w(f(c))=w(c)+\vd{c}e$ for every codeword $c$ of $\mathcal{C}^2$.
From this equation we subtract equations $w^{\lambda'}_j(f(c))=w^{\lambda''}_j(c)$ for $j=1,\dots,k-1$. Hence, $w^{\lambda'}_k(f(c))=w^{\lambda''}_k(c)+\vd{c}e$.
The inequality $w(c)\leq 2n$ holds for every codeword $c\in \mathcal{C}^2$.
Since $2n<e$, the following equality holds.
\begin{equation*}
w^{\lambda'}_k(f(c))\bmod e = (w^{\lambda''}_k(c)+\vd{c}e)\bmod e = w^{\lambda''}_k(c).
\end{equation*}

The weights of codewords of a code and its double code satisfy $2w^{\lambda}_j((c^1,\dots,c^n))=w^{\lambda''}_j((c^1,\dots,c^n,c^1,\dots,c^n))$ for every $j=1,\dots,k$.
Hence,
\begin{equation*}
W_\mathcal{C}^\lambda(x_1,\dots,x_k)=\sum_{\substack{i_1+i_2+\dots+i_k\leq m\\i_1\geq0,i_2\geq0,\dots,i_k\geq0}}a_{i_1i_2\dots i_k}x_1^{(i_1/2)}x_2^{(i_2/2)}\dots x_{k-1}^{(i_{k-1}/2)}x_k^{((i_k\bmod e)/2)},
\end{equation*}

\end{proof}

\begin{thm}
\label{cor:multpotts}
Let $G$ be a connected graph and let $n$ be the number of edges of $G$. Let $E=\{e_1,\dots,e_n\}$ be the set of edges of $G$ and let $w:E\mapsto\{w_1,\dots,w_k\}$ be weights of edges of $G$. Let $q$ be a prime. Then there exists triangular configuration $\Delta$ and an injection $\mu:\{1,\dots,2n\}\mapsto T(\Delta)$ such that if $$\sum_{\substack{i_1+i_2+\dots+i_k\leq m\\i_1\geq0,i_2\geq0,\dots,i_k\geq0}}a_{i_1i_2\dots i_k}x_1^{i_1}\dots x_{k-1}^{i_{k-1}}x_k^{i_k}$$
is the multivariate weight enumerator $W^{\lambda'}_{\ker\Delta}(x_1,\dots,x_k)$ of $\ker\Delta$ over $GF(q)$ with the assignment of variables $\lambda'$ defined:
$\lambda'(t):=i$ if $t\in\mu(\{1,\dots,n\})$ and $w(e_{\mu^{-1}(t)})=w_i$ or $t\in\mu(\{n+1,\dots,2n\})$ and $w(e_{\mu^{-1}(t)-n})=w_i$, and $\lambda'(t):=k$ if $t\notin\mu(\{1,\dots,2n\})$, then $P^q(G,x)$ equals
\begin{equation*}
\frac{q}{\prod_{uv\in E} x^{-w(uv)}}\sum_{\substack{i_1+i_2+\dots+i_k\leq m\\i_1\geq0,i_2\geq0,\dots,i_k\geq0}}a_{i_1i_2\dots i_k}x^{-w_1(i_1/2)}\dots x^{-w_{k-1}(i_{k-1}/2)}x^{-w_k((i_k\bmod e)/2)},
\end{equation*}
where $e$ is a positive integer linear in number of edges of $G$.
\end{thm}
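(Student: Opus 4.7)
The plan is to reduce $P^q(G,x)$ to a multivariate weight enumerator of the row space $\mathcal{C}$ of the oriented incidence matrix $O_G$ of $G$ over $GF(q)$, and then to invoke Theorem~\ref{thm:multrepr1}.

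Let $\mathcal{C}\subseteq GF(q)^n$ be the row space of $O_G$, with coordinates indexed by the edges $e_1,\dots,e_n$. For each coloring $s\colon V(G)\to GF(q)$ set $c_s\in\mathcal{C}$ by $c_s(uv)=s(u)-s(v)$ in the fixed orientation of $G$. Because $G$ is connected, the map $s\mapsto c_s$ has as kernel exactly the constant colorings and is therefore $q$-to-one onto $\mathcal{C}$, with $c_s(uv)=0$ iff $s(u)=s(v)$. Writing $W_E=\sum_{uv\in E}w(uv)$ and defining $\lambda\colon\{1,\dots,n\}\to\{1,\dots,k\}$ by $\lambda(i)=j$ whenever $w(e_i)=w_j$, we have $H(s)=W_E-\sum_{uv:\,c_s(uv)\neq 0}w(uv)$, whence summing $x^{H(s)}$ over $s$ and grouping by codeword yields
$$P^q(G,x)=q\,x^{W_E}\,W_{\mathcal{C}}^{\lambda}(x^{-w_1},\dots,x^{-w_k})=\frac{q}{\prod_{uv\in E}x^{-w(uv)}}\,W_{\mathcal{C}}^{\lambda}(x^{-w_1},\dots,x^{-w_k}).$$

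Since $q$ is prime, the additive group of $GF(q)$ is cyclic, so $\mathcal{C}$ admits a representable basis and Theorem~\ref{thm:multrepr1} applies. Let $\Delta$, $\mu$ and the variable assignment produced by that theorem be chosen; this assignment coincides with the $\lambda'$ prescribed in the present statement, as it agrees with $\lambda\circ\mu^{-1}$ on both halves of $\mu(\{1,\dots,2n\})$ and takes the value $k$ outside the image of $\mu$. Substituting $x_j=x^{-w_j}$ into
$$W_{\mathcal{C}}^{\lambda}(x_1,\dots,x_k)=\sum a_{i_1\dots i_k}\,x_1^{i_1/2}\cdots x_{k-1}^{i_{k-1}/2}\,x_k^{(i_k\bmod e)/2},$$
furnished by Theorem~\ref{thm:multrepr1} with $e=(|S(\ker\Delta,\mathcal{C})|-n)/\dim\mathcal{C}$, and then multiplying by $q/\prod_{uv}x^{-w(uv)}$, delivers the claimed closed form for $P^q(G,x)$.

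For the linearity of $e$ in $|E|$, recall that $\Delta$ is built from the double code $\mathcal{C}^2$, which has length $2n=2|E|$; Proposition~\ref{prop:balancedrepr} allows $\ex{\Delta}$ to be chosen only slightly above $2|E|$, and since each basis vector is a row of $O_G$ with at most two distinct nonzero entries in $GF(q)$, every $|T(\Delta^{\mathcal{C}^2}_{b})|-w(b)$ is $O(|E|)$. The only nontrivial step in the plan is the reduction in the first paragraph, namely the identification of $P^q(G,x)$ with a specialization of $W^{\lambda}_{\mathcal{C}}$; the rest is direct substitution into Theorem~\ref{thm:multrepr1}.
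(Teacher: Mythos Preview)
Your argument is correct and follows essentially the same route as the paper: you identify $P^q(G,x)$, up to the factor $q/\prod_{uv}x^{-w(uv)}$, with the multivariate weight enumerator of the row space of the oriented incidence matrix over $GF(q)$ via the $q$-to-one map $s\mapsto c_s$, and then apply Theorem~\ref{thm:multrepr1}. The paper does the same computation, phrased in terms of $cut(s)$ and $O_G^T z$; your added remarks on why $e$ can be taken linear in $|E|$ go slightly beyond what the paper spells out, but are consistent with its constructions.
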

\begin{proof}
 The proof follows from the following calculations.
\begin{equation*}
 \prod_{uv\in E} x^{-w(uv)} P^q(G,x)=\sum_{s:V\mapsto \{0,\dots,q-1\}}\prod_{uv\in E}x^{\delta(s(u),s(v))w(uv)-w(uv)}
\end{equation*}
Let $cut(s)$ be the set of edges $uv$ of $G$ such that $s(u)\neq s(v)$. Then

\begin{equation*} 
\sum_{s:V\mapsto \{0,\dots,q-1\}}\prod_{uv\in E}x^{\delta(s(u),s(v))w(uv)-w(uv)}=\sum_{s:V\mapsto \{0,\dots,q-1\}}\prod_{uv\in cut(s)}x^{-w(uv)}.
\end{equation*}
The following part of this proof is taken from Galluccio and Loebl~\cite{loeblpfaffian2} and modified for the multivariate enumerator.
Let $z$ be a vector from $GF(q)^{\lvert V(G)\rvert}$ defined as $z_v:=s(v)$. Let $O_G$ be an oriented incidence matrix of $G$.
Then, $e\in cut(s)$ if and only if $(O_g^Tz)_e\neq 0$. Hence,
\begin{equation*}
\sum_{s:V\mapsto \{0,\dots,q-1\}}\prod_{uv\in cut(s)}x^{-w(uv)}=\sum_{z\in GF(q)^{\vert V(G)\rvert}}\prod_{\substack{(O^Tz)_{uv}\neq 0\\ uv\in E(G)}}x^{-w(uv)}.
\end{equation*}
Let $\mathcal{C}$ equals $\{O^T_Gz\vert z\in GF(q)\}$. Let us define an equivalence on $GF(q)^{\lvert V(G)\rvert}$ by $w\equiv z$ if $O^T_Gw=O^T_Gz$. Observe that each equivalence class consists of $q$ elements since $O^T_Gw=O^T_Gz$ if and only if $z-w$ is a constant vector. Hence, 
\begin{equation*}
\begin{split}
\sum_{z\in GF(q)^{\vert V(G)\rvert}}\prod_{\substack{(O^Tz)_{uv}\neq 0\\ uv\in E(G)}}x^{-w(uv)}&=q\sum_{c\in\mathcal{C}}\prod_{\substack{c_{uv}\neq 0\\uv\in E(G)}}x^{-w(uv)}=q\sum_{c\in\mathcal{C}}\prod_{\substack{c_{uv}\neq 0\\uv\in E(G)}}x^{-w_{\lambda(uv)}}\\&=qW^{\lambda}_\mathcal{C}(x^{-w_1},\dots,x^{-w_k}),
\end{split}
\end{equation*}
where $\lambda$ is the assignment of variables defined: $\lambda(uv):=i$ if $w(uv)=w_i$, for $i=1,\dots,k$.
By Corollary~\ref{thm:repr2}, $\mathcal{C}$ is triangular representable. By Theorem~\ref{thm:multrepr1},
\begin{equation*}
qW_\mathcal{C}^{\lambda}(x^{-w_1},\dots,x^{-w_k})=q\sum_{\substack{i_1+i_2+\dots+i_k\leq m\\i_1\geq0,i_2\geq0,\dots,i_k\geq0}}a_{i_1i_2\dots i_k}x^{-w_1(i_1/2)}\dots x^{-w_{k-1}(i_{k-1}/2)}x^{-w_k((i_k\bmod e)/2)},
\end{equation*}
where $e=(\lvert S(\ker\Delta,\mathcal{C})\rvert-\lvert E(G)\rvert)/\dim\mathcal{C}$.

\end{proof}
Theorem~\ref{cor:potts} follows from the above theorem.

\section{Triangular non-representability}

In this section we prove a sufficient condition for triangular non-re\-pre\-sen\-ta\-bi\-li\-ty. We will show that for non-prime fields every construction of triangular representation fails on very weak condition that a linear code and its triangular representation have to have the same dimension.

\begin{proof}[Proof of Theorem~\ref{thm:nonrepr}.]
First, we observe that the field $\mathbb{F}$ contains a proper subfield $\mathbb{P}$.
We use the fact that every field contains a prime subfield $\mathbb{P}$ isomorphic to rationals or $GF(q)$, where $q$ is a prime, and the fact that every two elements of a prime field belong to a common cyclic subgroup of the additive group of the prime field.

We can view the field $\mathbb{F}$ as a vector space over $\mathbb{P}$. This space can have an infinite dimension. An element $f$ of $\mathbb{F}$ is equal to $(f^1,f^2,\dots)$, where $f^i$'s are elements of $\mathbb{P}$.
We identify the vectors $(f^1,0,0,\dots)$ that have only first non-zero coordinate with the subfield $\mathbb{P}$.
Let $f=(f^1,f^2,f^3,\dots)$ be an element of $\mathbb{F}$. We define two projections on the vector space $\mathbb{F}$ over $\mathbb{P}$:
$$[f]^1:=\left(f^1,0,0,\dots\right)$$ and
$$[f]^{2+}:=\left(0,f^2,f^3,\dots\right).$$
Note that, $[f]^1\in\mathbb{P}$ for all $f\in\mathbb{F}$, and $[f]^{2+}=0$ or $[f]^{2+}\in\mathbb{F}\setminus\mathbb{P}$ for all $f\in\mathbb{F}$.

For a contradiction suppose that the linear code $\mathcal{C}$ is triangular representable.
We say that a vector of $\mathcal{C}$ is bad if it has two entries which belong to no common cyclic subgroup.
Let $\Delta$ be a triangular representation of $\mathcal{C}$.
Let $B$ be a basis of $\mathcal{C}$ with the minimum number of bad vectors.
Let $b\in B$ be a bad vector.
We recall that each basis $B$ has a bad vector by assumptions of the theorem.

The bad vector $b$ contains two entries $p,f$ which belong to no common cyclic subgroup.
We can suppose that $p$ belongs to $\mathbb{P}$, otherwise we choose a non-zero scalar $\alpha$ such that $\alpha p\in\mathbb{P}$ and replace $b$ by $\alpha b$ in basis $B$.
Then, the entry $f$ belongs to $\mathbb{F}\setminus\mathbb{P}$.

Let $v$ be a vector from $\ker\Delta$. We define two projections:
$$[v]^1:=([v^1]^1,[v^2]^1,[v^3]^1,\dots)$$
and
$$[v]^{2+}:=([v^1]^{2+},[v^2]^{2+},[v^3]^{2+},\dots).$$
Since every element of the incidence matrix of $\Delta$ is $0$ or $1$, the projections $[v]^1$ and $[v]^{2+}$ of every vector $v\in\ker\Delta$ belong to $\ker\Delta$.

Since the linear code $\mathcal{C}$ is a punctured code of $\ker\Delta$ and the codes $\mathcal{C}$ and $\ker\Delta$ have the same number of codewords, we can define mapping $g$ from $\mathcal{C}$ to $\ker\Delta$ such that $c$ is a punctured codeword of $g(c)$ for every $c\in\mathcal{C}$.
The mapping $g$ is linear and bijective.

The set $B_\Delta:=\{g(b)\vert b\in B\}$ is a basis of $\ker\Delta$.
The equation $g(b)=[g(b)]^1+[g(b)]^{2+}$ holds.
Since $g(b)$ contains both entries $p$ and $f$, both vectors $[g(b)]^1$ and $[g(b)]^{2+}$ are non-zero. The vectors $[g(b)]^1$ and $g[(b)]^{2+}$ are linear independent. Hence, one of the vectors $[g(b)]^1$ or $[g(b)]^{2+}$ does not belong to $\linspan{B_\Delta\setminus\{g(b)\}}$. We denote this vector by $g(b')$.

Hence, the set $\{g(b')\}\cup B_\Delta\setminus \{g(b)\}$ is a basis of $\ker\Delta$ and the set $B':=\{b'\}\cup B\setminus \{b\}$ is a basis of $\mathcal{C}$.
Now, basis $B'$ has smaller number of bad vectors than $B$, a contradiction with the minimality of $B$.

\end{proof}

\begin{proof}[Proof of Corollary~\ref{cor:nonrepr}.]
We know that the field $\mathbb{F}$ contains a proper subfield $\mathbb{P}$.
Let $p$ be an element of $\mathbb{P}$ and let $f$ be an element of $\mathbb{F}\setminus\mathbb{P}$. By Theorem~\ref{thm:nonrepr}, the linear code $C=\linspan{\{(f,p)\}}$ over $\mathbb{F}$ is not triangular representable.
\end{proof}

\section*{Acknowledgment}
I would like to thank Martin Loebl for helpful discussions and continual support.

\bibliography{references}{}
\bibliographystyle{elsarticle-num}
\end{document}